\newcommand{\ra}{\rightarrow}
\newcommand{\bC}{\mathbb C}
\newcommand{\bF}{\mathbb F}
\newcommand{\bP}{\mathbb P}
\newcommand{\bQ}{\mathbb Q}
\newcommand{\bZ}{\mathbb Z}
\newcommand{\cC}{\mathcal C}
\newcommand{\cDP}{\mathcal{DP}}
\newcommand{\cH}{\mathcal H}
\newcommand{\cI}{\mathcal I}
\newcommand{\cN}{\mathcal N}
\newcommand{\cS}{\mathcal S}
\newcommand{\cO}{\mathcal O}
\newcommand{\fD}{\mathfrak D}
\newcommand{\fS}{\mathfrak S}
\DeclareMathOperator{\Proj}{Proj}
\DeclareMathOperator{\Hilb}{Hilb}
\DeclareMathOperator{\HH}{H}
\DeclareMathOperator{\im}{im}
\DeclareMathOperator{\Sym}{Sym}
\newcommand{\Bl}{\operatorname{Bl}}
\newcommand{\Br}{\operatorname{Br}}
\newcommand{\BS}{\operatorname{BS}}
\newcommand{\cor}{\operatorname{cor}}
\newcommand{\Gal}{\operatorname{Gal}}
\newcommand{\Gr}{\operatorname{Gr}}
\newcommand{\Hom}{\operatorname{Hom}}
\newcommand{\NS}{\operatorname{NS}}
\newcommand{\PGL}{\operatorname{PGL}}
\theoremstyle{plain}
\newtheorem{prop}{Proposition}
\newtheorem{theo}[prop]{Theorem}
\newtheorem{coro}[prop]{Corollary}
\theoremstyle{remark}
\newtheorem{rema}[prop]{Remark}
\newtheorem{remas}[prop]{Remarks}
\newtheorem*{lemm}{Lemma}
\newtheorem*{note}{Note}
\theoremstyle{definition}
\newtheorem{defi}[prop]{Definition}
\author[Addington]{Nicolas Addington}
\address{Department of Mathematics \\
University of Oregon \\
Eugene, OR 97403-1222 \\
USA}
\email{adding@uoregon.edu}
\author[Hassett]{Brendan Hassett}
\address{Department of Mathematics \\
Brown University \\
Box 1917 \\
151 Thayer Street \\
Providence, RI 02912 \\
USA}
\email{bhassett@math.brown.edu}
\author[Tschinkel]{Yuri Tschinkel}
\address{Courant Institute\\
New York University \\
New York, NY 10012 \\
USA }
\address{Simons Foundation\\
160 Fifth Avenue\\
New York, NY 10010\\
USA}
\email{tschinkel@cims.nyu.edu}
\author[V\'arilly-Alvarado]{Anthony V\'arilly-Alvarado}
\address{Department of Mathematics \\
Rice University MS 136 \\
Houston, TX 77251-1892 \\
USA}
\email{av15@rice.edu}
\title[Cubic fourfolds]{Cubic fourfolds fibered in \\ sextic del Pezzo surfaces}
\begin{document}

\begin{abstract}
We exhibit new examples of rational cubic fourfolds, parametrized by a countably infinite union of codimension-two
subvarieties in the moduli space.  Our examples
are fibered in sextic del Pezzo surfaces over the projective plane; they are rational whenever the fibration has a rational section.
\end{abstract}

\maketitle

The rationality problem for complex cubic fourfolds has been studied by
many authors; see \cite[\S 1]{Has16} for background and references
to the extensive literature on this subject. The moduli space $\cC$
of smooth cubic fourfolds has dimension 20. Since the 1990's, 
the only cubic fourfolds {\em known} to be rational are:
\begin{itemize}
\item{Pfaffian cubic fourfolds and their limits~\cite{BRS}.  These form a divisor
$\cC_{14} \subset \cC$.}
\item{Cubic fourfolds containing a plane $P\subset X$ such that the
induced quadric surface fibration $\Bl_P(X)\ra \bP^2$
has an odd-degree multisection~\cite{Has99}.  These form a countably infinite union of codimension-two loci
$\bigcup \cC_{K} \subset \cC$, dense in the divisor $\cC_8$ parametrizing
cubic fourfolds containing a plane.}
\end{itemize}
Our main result is:
\begin{theo} \label{theo:main}
Let $\cC_{18} \subset \cC$ denote the divisor of cubic fourfolds of discriminant
$18$. There is a Zariski open subset $U \subset \cC_{18}$ and
a countably infinite union of codimension-two loci
$\bigcup \cC_{K} \subset \cC$, dense in $\cC_{18}$, such that
$\bigcup \cC_K \cap U$ parametrizes rational cubic fourfolds.
\end{theo}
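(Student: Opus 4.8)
The plan is to exhibit a general member of $\cC_{18}$ as birational to a fibration $\pi\colon \tilde X \to \bP^2$ whose generic fiber is a sextic del Pezzo surface over $F = \bC(\bP^2)$, and then to read off rationality over $\bC$ from the arithmetic of that fiber.

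First I would produce the fibration. Membership in $\cC_{18}$ means that the lattice of integral Hodge classes $H^{2,2}(X,\bZ)$ contains a saturated rank-two sublattice $\langle h^2, T\rangle$ of discriminant $18$, where $h$ is the hyperplane class. I expect the class $T$ to be represented by an explicit algebraic surface in $X$, and the linear system associated with $T$ (or projection from $T$) to carve $X$ into anticanonical degree-six del Pezzo surfaces parametrized by a $\bP^2$. After a birational modification (blowing up $T$ or a related locus) this should yield $\tilde X \to \bP^2$ with general fiber a smooth sextic del Pezzo surface, and $U \subset \cC_{18}$ is precisely the locus on which the construction is well behaved: irreducible general fiber, at worst mild degenerations over a discriminant curve in $\bP^2$. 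Carrying this out is the geometric heart of the argument and the step I expect to be the main obstacle, since one must pin down the surface class of discriminant $18$, check that the generic residual intersection is genuinely a smooth degree-six del Pezzo surface, and analyze the degenerate fibers.

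Next, rationality reduces to the existence of a rational section. A del Pezzo surface of degree six over a field is rational as soon as it has a rational point: with a point one can contract a Galois-stable collection of $(-1)$-curves and reach a rational surface (classical, cf.\ Manin). Hence the generic fiber is $F$-rational exactly when it carries an $F$-point, that is, when $\pi$ admits a rational section; in that event $\tilde X$, and therefore $X$, is rational over $\bC$. It therefore suffices to detect when $\pi$ acquires a section.

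Finally I would characterize the section locus Hodge-theoretically and establish density. The obstruction to an $F$-point of the generic fiber lies in the Brauer group: a sextic del Pezzo surface carries Brauer classes of order dividing $2$ and $3$, arising from its two conic-bundle structures and the splitting behavior of the hexagon of $(-1)$-curves, and these organize into classes attached to $\pi$. Via the Hodge theory of special cubic fourfolds, the appearance of an additional algebraic class --- i.e.\ the refinement of $\langle h^2, T\rangle$ to a sublattice $K$ of rank three and appropriate discriminant --- should yield a multisection of $\pi$ whose degree is coprime to these orders, forcing the obstruction to vanish and a section to exist. Each admissible refinement cuts out a locus $\cC_K$ of codimension two in $\cC$ (codimension one in $\cC_{18}$), and there are countably many such loci. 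That $\bigcup \cC_K$ is dense in $\cC_{18}$ follows from a Noether--Lefschetz density argument: the admissible discriminants are unbounded, so the corresponding Hodge loci accumulate along $\cC_{18}$, exactly as in the quadric-surface analysis underlying $\cC_8$. Intersecting with $U$ to keep the fibration well behaved, one concludes that $\bigcup \cC_K \cap U$ parametrizes rational cubic fourfolds.
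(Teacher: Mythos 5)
Your proposal reproduces the paper's strategy: a fibration in sextic del Pezzo surfaces over $\bP^2$, rationality of the generic fiber detected by a zero-cycle of degree prime to six (Manin plus Blunk's Brauer-class criterion), and a countable union of lattice-theoretic divisors inside $\cC_{18}$ where a suitable multisection appears, dense by a Noether--Lefschetz/Torelli argument. However, two steps that the paper actually carries out are absent, and both are essential. The first is the one you yourself flag as the main obstacle: the construction of the fibration. The paper proves (Theorems~\ref{theo:sexticcubic} and~\ref{theo:fibration}) that a generic $X \in \cC_{18}$ contains a sextic \emph{elliptic ruled surface} $T$ with $\langle h^2,T\rangle$ of discriminant $18$, that the quadrics through $T$ form a net whose base locus is $\Pi_1 \cup T \cup \Pi_2$ for two disjoint planes, and that the surface residual to $T$ in the intersection of $X$ with two quadrics of the net is a smooth sextic del Pezzo; the fibration is the map $\Bl_T(X) \to \bP^2$ given by this net (your alternative guess, projection from $T$, is not what works). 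Establishing all this requires the Hilbert-scheme dimension count, the analysis of the complete intersection of quadrics through two disjoint planes, and an explicit example (verified by computer) to prove dominance onto $\cC_{18}$; none of this is supplied or correctly guessed in your sketch, and it is the geometric core of the theorem rather than a routine verification.

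The second gap is the passage from the Hodge-theoretic condition to an actual multisection. The loci $\cC_K$ are cut out by the existence of a \emph{Hodge} class $\Sigma$ with $\Sigma.S = 1$; your phrase ``the appearance of an additional algebraic class'' begs the question, since the density argument (and the very definition of the $\cC_K$) is purely Hodge-theoretic. The paper closes this by invoking Voisin's theorem that the integral Hodge conjecture holds for cubic fourfolds, promoting $\Sigma$ to an algebraic cycle, and then applying the projection formula to get a cycle meeting the generic fiber of $\pi$ in degree $1$. Without this input (or a substitute), the lattice condition produces no zero-cycle on the generic fiber and the rationality criterion of Proposition~\ref{prop:rational_section} cannot be applied. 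There is also a smaller omission: to know that infinitely many admissible rank-three lattices $K$ actually embed in $H^4(X,\bZ)$ extending $\langle h^2,S\rangle$ (and hence that the $\cC_K$ are nonempty divisors in $\cC_{18}$), the paper runs a Nikulin embedding argument and normalizes to discriminants $\Delta \equiv 9 \pmod{12}$; your appeal to ``unbounded admissible discriminants'' assumes rather than proves this.
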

We show that the generic element $X$ of $\cC_{18}$ is birational to a fibration in sextic del Pezzo surfaces over $\mathbb P^2$. Its generic fiber and thus $X$ are rational if the fibration admits a section. In fact, a multisection of degree prime to three suffices to establish rationality of $X$. This condition can be expressed in Hodge-theoretic terms, and is satisfied along a countably infinite union of divisors in $\cC_{18}$.

In Section~\ref{sect:sexticcubic} we show that a generic cubic fourfold in $\cC_{18}$ contains an elliptic ruled surface. In Section~\ref{sect:constructing} we construct fibrations in sextic del Pezzo surfaces from elliptic ruled surfaces.  After reviewing the rationality of sextic del Pezzo surfaces in Section~\ref{sect:basics}, we prove Theorem~\ref{theo:main} in Section~\ref{sect:rationality}.  In Section~\ref{sect:fibrations} we analyze the degenerate fibers of fourfolds fibered in sextic del Pezzo surfaces.  Our approach is grounded in assumptions on the behavior of `generic' cases;
Section~\ref{sect:example} validates these in a concrete
example.

\bigskip

\noindent {\bf Acknowledgments:} We are grateful to Asher Auel and Alexander Perry for stimulating discussions on related topics, and to the referees who suggested many improvements. The second author is partially supported by NSF grant 1551514; the fourth author is supported by NSF grant 1352291.

\section{Sextic elliptic ruled surfaces and cubic fourfolds}
\label{sect:sexticcubic}

\begin{theo} \label{theo:sexticcubic}
A generic cubic $X \in \cC_{18}$ contains a surface $T$ of degree 6, with the following properties:
\begin{enumerate}
\item $T$ is an elliptic ruled surface, that is, it admits a $\bP^1$-fibration over an elliptic curve $E$;
\item the linear system of quadrics in $\bP^5$ containing $T$ is 2-dimensional;
\item the base locus of the linear system is a complete intersection $\Pi_1 \cup T \cup \Pi_2$, where $\Pi_1$ and $\Pi_2$ are disjoint planes; and
\item the curves $E_i := \Pi_i \cap T$ give sections of the $\bP^1$-fibration $T \to E$.
\end{enumerate}
\end{theo}

\noindent We do not claim that the set of cubics $X \in \cC_{18}$ for which these conclusions hold is open, only that it contains a non-empty open set; see \cite{BRS} for comparable subtleties in $\cC_{14}$.

Before proving this we prove two preliminary results.  We take the planes $\Pi_1, \Pi_2$ as our starting point.

\begin{prop} \label{prop:two_planes}
Let $\Pi_1, \Pi_2 \subset \bP^5$ be two disjoint planes.  Then for a generic choice of three quadrics $Q_1, Q_2, Q_3$ containing $\Pi_1 \cup \Pi_2$, we have
\[ Q_1 \cap Q_2 \cap Q_3 = \Pi_1 \cup T \cup \Pi_2, \]
where $T$ is an elliptic ruled surface of degree 6.  Moreover, the curves $E_i := T \cap \Pi_i$ are sections of the $\bP^1$-fibration $T \to E$.
\end{prop}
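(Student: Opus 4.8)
The plan is to choose coordinates on $\bP^5$ so that $\Pi_1 = \{x_3 = x_4 = x_5 = 0\}$ and $\Pi_2 = \{x_0 = x_1 = x_2 = 0\}$. Writing $\mathbf{x} = (x_0,x_1,x_2)$ and $\mathbf{y} = (x_3,x_4,x_5)$, a quadric contains both planes exactly when it is a bilinear form $B(\mathbf{x},\mathbf{y}) = \mathbf{x}^{T} A \mathbf{y}$ for a $3\times 3$ matrix $A$, so $Q_1,Q_2,Q_3$ correspond to matrices $A_1,A_2,A_3$ and ``generic'' means generic in this nine-dimensional space. The geometric engine is the family of lines joining the two planes: every point of $\bP^5 \setminus (\Pi_1 \cup \Pi_2)$ lies on a unique line $\ell_{\mathbf{x},\mathbf{y}}$ meeting $\Pi_1$ at $[\mathbf{x}]$ and $\Pi_2$ at $[\mathbf{y}]$, and at the point $[\lambda\mathbf{x}:\mu\mathbf{y}]$ the form $Q_k$ takes the value $\lambda\mu\,B_k(\mathbf{x},\mathbf{y})$. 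Thus $Q_k$ vanishes at the two endpoints automatically, and on all of $\ell_{\mathbf{x},\mathbf{y}}$ precisely when $B_k(\mathbf{x},\mathbf{y}) = 0$. It follows set-theoretically that $Q_1 \cap Q_2 \cap Q_3 = \Pi_1 \cup T \cup \Pi_2$, where $T$ is the union of the lines $\ell_{\mathbf{x},\mathbf{y}}$ as $([\mathbf{x}],[\mathbf{y}])$ ranges over $Z := \{B_1 = B_2 = B_3 = 0\} \subset \bP^2 \times \bP^2$.

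Next I would show that $Z$ is a smooth elliptic curve. Since the linear system $\lvert\mathcal{O}(1,1)\rvert$ is base-point-free (it gives the Segre embedding), Bertini shows three generic members meet in a smooth curve, and ampleness of $(1,1)$ gives connectedness. By adjunction $K_Z \cong (K_{\bP^2\times\bP^2}\otimes\mathcal{O}(3,3))|_Z \cong \mathcal{O}_Z$ since $K_{\bP^2\times\bP^2}\cong\mathcal{O}(-3,-3)$, so $Z$ has genus one. The two projections identify $Z$ with plane cubics: the image of $Z$ under $\mathbf{y}\mapsto[\mathbf{y}]$ is the cubic $E = \{\det[A_1\mathbf{y}\mid A_2\mathbf{y}\mid A_3\mathbf{y}] = 0\}$, and for generic $A_k$ this matrix has rank exactly $2$ along $E$, so $\mathbf{x}$ is recovered as the orthogonal complement and $Z\to E$ is a birational morphism of smooth curves, hence an isomorphism; symmetrically for the $\mathbf{x}$-projection. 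Consequently $T\to Z$ is a $\bP^1$-bundle over the elliptic curve $Z$, and the curves $E_i := T\cap\Pi_i$, being the images of the two projections $Z\to\Pi_i$ (each line meets $\Pi_i$ in a single point), are disjoint sections.

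It remains to compute $\deg T = 6$. Because $Q_1\cap Q_2\cap Q_3$ has pure dimension two, the three quadrics form a regular sequence and the intersection is a complete intersection of cycle-theoretic degree $2^3 = 8$. A tangent-space computation at a generic point $[\mathbf{x}_0:0]\in\Pi_i$ shows the intersection is smooth there, since the three differentials $(A_k^{T}\mathbf{x}_0)\cdot d\mathbf{y}$ are independent for generic $\mathbf{x}_0$; hence each plane occurs with multiplicity one and $m_T\deg T = 8-1-1 = 6$, where $m_T$ is the multiplicity of $T$. Finally $T$ is irreducible and nondegenerate (the cubics $E_1,E_2$ already span $\Pi_1$ and $\Pi_2$, hence all of $\bP^5$), so $\deg T \geq 5-2+1 = 4$ by the minimal-degree bound for nondegenerate surfaces, forcing $m_T = 1$ and $\deg T = 6$. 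Alternatively one reads off $\deg T = 6$ from the ruled structure: the disjoint sections satisfy $E_i^2 = 0$ and $H \equiv E_1 + 3f$, whence $H^2 = 6$.

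I expect the genericity bookkeeping to be the main obstacle: one must check that, for a dense open set of triples $(A_1,A_2,A_3)$, the cubic $E$ is smooth, the rank stays equal to $2$ along $E$ (so the projections are genuine isomorphisms and $T$ is a smooth ruled surface rather than a degeneration), and the planes appear reduced. Each of these is an open condition, so it is enough to exhibit a single triple realizing the expected behaviour; I would pin this down with an explicit near-diagonal choice of the $A_k$ and a direct calculation, in the spirit of Section~\ref{sect:example}.
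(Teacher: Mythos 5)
Your proposal is correct, and at its core it is the paper's own proof written out in coordinates: your family of lines $\ell_{\mathbf{x},\mathbf{y}}$ is exactly the $\bP^1$-bundle $p\colon\tilde\bP^5\to\Pi_1\times\Pi_2$ that the paper obtains by blowing up the two planes, your identification of quadrics through both planes with bilinear forms $\mathbf{x}^T A_k\mathbf{y}$ is the paper's identification with $(1,1)$-divisors, and the Bertini-plus-adjunction step producing the elliptic curve $Z=E$ is identical. The one step where you genuinely diverge is the proof that the projections $Z\to E_i$ are isomorphisms onto smooth plane cubics: you assert that the matrix $[A_1\mathbf{y}\mid A_2\mathbf{y}\mid A_3\mathbf{y}]$ has rank exactly $2$ along $E$ ``for generic $A_k$'' and defer this to an explicit example, flagging it as the main obstacle. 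The paper closes this synthetically and for free: the fiber of $Z\to\Pi_i$ over any point is a \emph{linear} subspace of the other plane, and a smooth irreducible genus-one curve contains no line, so once Bertini gives that $Z$ is a smooth elliptic curve the projections are automatically injective (equivalently, the rank can never drop to $1$ along $E$); then $p_g(E_i)=1=p_a(E_i)$ forces $E_i$ to be smooth and $Z\to E_i$ to be an isomorphism. So the genericity bookkeeping you worry about at the end is already subsumed by Bertini, and no explicit example is needed. On the other side of the ledger, your degree computation is more complete than the paper's, which leaves $\deg T=6$ implicit: your tangent-space check that the planes appear with multiplicity one, combined with Cohen--Macaulayness of complete intersections, even yields the scheme-theoretic (reduced) equality $Q_1\cap Q_2\cap Q_3=\Pi_1\cup T\cup\Pi_2$, which the paper relies on later in Proposition~\ref{prop:homma_stuff} without comment. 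One small caveat in your ``alternative'' numerical argument: disjointness of the two sections alone does not give $E_i^2=0$; you also need $H\cdot E_1=H\cdot E_2=3$ (which forces $E_1\equiv E_2$ numerically, hence $E_1^2=E_1\cdot E_2=0$), so the complete-intersection argument should be regarded as the primary one.
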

\begin{proof}
Observe that a point in $\bP^5 \setminus (\Pi_1 \cup \Pi_2)$ is contained in a unique line that meets $\Pi_1$ and $\Pi_2$, giving a rational map $\bP^5 \dashrightarrow \Pi_1 \times \Pi_2$.  The graph of this rational map is the blow-up $\tilde\bP^5$ of $\bP^5$ along $\Pi_1$ and $\Pi_2$.  Label the projections $p$ and $q$ as
\[ \xymatrix{
\tilde\bP^5 \ar[d]_-q \ar[r]^-p & \Pi_1 \times \Pi_2 \\
{\phantom{,}}\bP^5,
} \]
and observe that $p$ is a $\bP^1$-bundle, the projectivization of $\cO(-1,0) \oplus \cO(0,-1)$ over $\bP^1 \times \bP^1$.

Let $F_1, F_2 \subset \tilde\bP^5$ be the exceptional divisors, let $H$ be the hyperplane class on $\bP^5$, and let $H_1, H_2$ denote the pullbacks of the hyperplane classes on $\Pi_1 \times \Pi_2$.  We find that $p^* H_i = q^* H - F_i$, so $p^*(H_1 + H_2) = q^*(2H) - F_1 - F_2$; that is, divisors of type $(1,1)$ on $\Pi_1 \times \Pi_2$ correspond to quadrics in $\bP^5$ containing $\Pi_1$ and $\Pi_2$.

By Bertini's theorem and the adjunction formula, a generic intersection of three divisors of type $(1,1)$ is a smooth genus-one curve $E \subset \Pi_1 \times \Pi_2$.  The projection onto either factor maps $E$ isomorphically onto a smooth cubic curve $E_i \subset \Pi_i$, as follows.  A generic intersection of two $(1,1)$-divisors is a sextic del Pezzo surface in $\Pi_1 \times \Pi_2$, and the projection onto either plane contracts three lines.  The third $(1,1)$-divisor meets these lines in at most one point each (since $E$ contains no lines), so the projections $E \to E_i$ are isomorphisms.

Because $p$ is a $\bP^1$-bundle, $\tilde T := p^{-1}(E)$ is an elliptic ruled surface.  The exceptional divisors $F_1, F_2 \subset \tilde\bP^5$ give sections of $p$, hence sections of $\tilde T \to E$.  Let $T = q(\tilde T) \subset \bP^5$.  Because the projections $E \to E_i \subset \Pi_i$ are isomorphisms, it follows that $\tilde T \to T$ is an isomorphism and $T \cap \Pi_i = E_i$.

We have $\deg T = 6$ because $T$ is residual to a pair of planes in a complete
intersection of three quadrics.
\end{proof}

\begin{prop} \label{prop:homma_stuff}
Let $T \subset \bP^5$ be an elliptic ruled surface as in Proposition~\ref{prop:two_planes}.  Then the homogeneous ideal of $T$ is generated by quadrics and cubics.  Moreover,
\begin{align*}
h^0(\cI_T(1)) &= 0, &
h^0(\cI_T(2)) &= 3, &
h^0(\cI_T(3)) &= 20.
\end{align*}
\end{prop}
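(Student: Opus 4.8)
The plan is to read off all three cohomology dimensions from the ideal-sheaf sequence
\[ 0 \to \cI_T(k) \to \cO_{\bP^5}(k) \to \cO_T(kH) \to 0, \]
where $H$ is the hyperplane class. Taking cohomology and using $h^1(\bP^5,\cO(k)) = 0$ gives
\[ h^0(\cI_T(k)) = h^0(\cO_{\bP^5}(k)) - h^0(\cO_T(kH)) + h^1(\cI_T(k)), \]
so everything reduces to (i) computing $h^0(\cO_T(kH))$ for $k = 1,2,3$ and (ii) showing that the restriction maps $H^0(\cO_{\bP^5}(k)) \to H^0(\cO_T(kH))$ are surjective, i.e.\ $h^1(\cI_T(k)) = 0$, in these degrees. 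Since $h^0(\cO_{\bP^5}(k)) = \binom{5+k}{k}$ equals $6, 21, 56$ for $k = 1,2,3$, the target values $0,3,20$ follow once $h^0(\cO_T(kH)) = 6,18,36$ and the $h^1(\cI_T(k))$ vanish.

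For the numerical input I would first record the intersection theory on $T$. A generic hyperplane section $C$ is smooth, and since the ruling restricts to a degree-one map $C \to E$ (the fibers are lines, so $H \cdot f = 1$), it is an isomorphism; thus $C \cong E$ has arithmetic genus $1$. Adjunction $2p_a(C) - 2 = H^2 + H \cdot K_T$ together with $H^2 = 6$ then yields $H \cdot K_T = -6$, while $\chi(\cO_T) = 0$ because $T$ is ruled over an elliptic curve. Riemann--Roch gives
\[ \chi(\cO_T(kH)) = \tfrac12\bigl(k^2 H^2 - k\,H\cdot K_T\bigr) = 3k(k+1), \]
that is, $6, 18, 36$ for $k = 1,2,3$.

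It remains to see that $\chi$ computes $h^0$, i.e.\ that $h^1(\cO_T(kH)) = h^2(\cO_T(kH)) = 0$ for $k \ge 1$. Writing $\pi \colon T \to E$ for the ruling, $\cO_T(kH)$ has degree $k$ on each fiber, so $R^1\pi_* \cO_T(kH) = 0$ and $\pi_*\cO_T(kH)$ is locally free of rank $k+1$ on $E$; the Leray spectral sequence then gives $h^2(\cO_T(kH)) = 0$ and $h^1(\cO_T(kH)) = h^1(E, \pi_*\cO_T(kH))$. The two disjoint sections $E_1, E_2$ coming from the disjoint planes $\Pi_1,\Pi_2$ split the ruling, $T \cong \bP(L_1 \oplus L_2)$, so each $\pi_*\cO_T(kH)$ decomposes as a direct sum of line bundles on $E$ of total degree $3k(k+1) > 0$; the balanced structure $H \cdot E_1 = H \cdot E_2 = 3$, $E_1\cdot E_2 = 0$ keeps every summand of positive degree for $k \ge 1$, so their $H^1$ vanishes. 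This gives $h^0(\cO_T(kH)) = 3k(k+1)$.

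Finally, the vanishing $h^1(\cI_T(k)) = 0$ is the statement that $T \subset \bP^5$ is projectively normal in degrees $\le 3$: for $k = 1$ it is nondegeneracy, and for $k = 2,3$ it is Homma's normal-generation theorem for ample line bundles on elliptic ruled surfaces, applied to $\cO_T(H)$. Together with $h^2(\cO_T(kH)) = 0$ and $h^2(\cI_T(k)) = h^1(\cO_T(kH)) = 0$, this shows that $T$ is $3$-regular in the sense of Castelnuovo--Mumford, so its homogeneous ideal is generated in degrees $\le 3$; nondegeneracy rules out linear generators, leaving quadrics and cubics. I expect the surjectivity onto $H^0(\cO_T(2H))$ --- the degree-two projective normality that forces $h^0(\cI_T(2)) = 3$ rather than something larger --- to be the main obstacle, since the three quadrics through $T$ cut out $\Pi_1 \cup T \cup \Pi_2$ and only the cubics separate $T$ from the planes; controlling the multiplication map $\operatorname{Sym}^2 H^0(\cO_T(H)) \to H^0(\cO_T(2H))$ on the elliptic ruled surface is precisely the content of Homma's analysis.
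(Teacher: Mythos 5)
Your proposal is correct in outline and takes a genuinely different route from the paper's. The paper works \emph{extrinsically}: by Proposition~\ref{prop:two_planes} the three quadrics through $T$ cut out the complete intersection $\Pi_1 \cup T \cup \Pi_2$, and the residual sequence
\[ 0 \to \cI_{(\Pi_1 \cup T \cup \Pi_2)/\bP^5} \to \cI_{T/\bP^5} \to \cI_{E_1/\Pi_1} \oplus \cI_{E_2/\Pi_2} \to 0 \]
reduces all three dimension counts to a Koszul complex for the complete intersection plus $\cO_{\Pi_i}(-3)$, while the generation statement is cited from Homma's Theorem~3.3. You work \emph{intrinsically}: Riemann--Roch plus the splitting of the ruling gives $h^0(\cO_T(kH)) = 3k(k+1)$ --- this part is complete and correct, since each summand of $\pi_*\cO_T(kH)$ has degree exactly $3k > 0$ --- and then you reduce the dimension counts to projective normality in degrees $\le 3$ and get generation by quadrics and cubics for free from $3$-regularity, a nice bonus that removes the need for Homma's Theorem~3.3. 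The weak point is exactly where you say it is: degrees $2$ and $3$ of projective normality are deferred to ``Homma's normal-generation theorem for ample line bundles,'' which as stated is too strong (ample line bundles on elliptic ruled surfaces need not be normally generated, nor even very ample), so the numerical hypotheses of that citation would have to be checked for a bundle of fiber degree $1$. But you can close this yourself with the splitting you already introduced: writing $\pi_*\cO_T(H) = L_1 \oplus L_2$ with $\deg L_i = 3$, the degree-$k$ normality map decomposes into the pieces $\operatorname{Sym}^i H^0(L_1) \otimes \operatorname{Sym}^j H^0(L_2) \to H^0(L_1^{\otimes i} \otimes L_2^{\otimes j})$ with $i+j=k$, and each is surjective by projective normality of plane cubics together with the classical Castelnuovo--Mumford surjectivity of $H^0(A) \otimes H^0(B) \to H^0(A \otimes B)$ for line bundles of degrees $\ge 2$ and $\ge 3$ on an elliptic curve. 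With that inserted (plus the one-line check that $T$ is nondegenerate, e.g.\ because it contains the cubics $E_i$ spanning the two disjoint planes), your argument is a complete, self-contained alternative; the paper's is shorter because it exploits the specific complete-intersection geometry already established, rather than general normality results.
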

\begin{proof}
The first statement follows from \cite[Thm.~3.3]{Homma}.  The second can be found in \cite[\S2]{Homma}, or we can prove it as follows.  From the inclusion $T \subset (\Pi_1 \cup T \cup \Pi_2) \subset \bP^5$ we get an exact sequence
\[ 0 \to \cI_{(\Pi_1 \cup T \cup \Pi_2)/\bP^5} \to \cI_{T/\bP^5} \to \cI_{T/(\Pi_1 \cup T \cup \Pi_2)} \to 0. \]
Since $\Pi_1 \cup T \cup \Pi_2$ is a complete intersection of three quadrics, we can compute cohomology of the first term using a Koszul complex.  The third term is isomorphic to
\[ \cI_{E_1/\Pi_1} \oplus \cI_{E_2/\Pi_2} = \cO_{\Pi_1}(-3) \oplus \cO_{\Pi_2}(-3). \]
Then the calculation is straightforward.
\end{proof}

\begin{proof}[Proof of Theorem~\ref{theo:sexticcubic}] We retain
the notation of Proposition~\ref{prop:two_planes}.
We claim there exists a smooth, irreducible, open subset in
the Hilbert scheme
$$ \cH_T \subset \Hilb_{\bP^5}^{3n^2+3n}$$
of dimension 36,
parametrizing the elliptic ruled surfaces $T$ we described there.  (Here $3n^2+3n$ is the Hilbert polynomial of $T \subset \bP^5$.)
The pushforward of $\cI_{\Pi_1\cup \Pi_2}(2)$  -- the quadrics vanishing
on the planes $\Pi_1,\Pi_2 \subset \bP^5$ --
is a rank 9 vector bundle over a dense open subset of
$\Sym^2(\Gr(3,6))$. The relative Grassmannian of rank 3 vector subbundles of this rank 9 vector bundle gives a $\Gr(3,9)$-bundle over $\Sym^2(\Gr(3,6))$, 
and $\cH_T$ is isomorphic to an open subset of this $\Gr(3,9)$-bundle. To see this, observe that each $T$ arises from a \emph{unique} pair of planes $\Pi_1, \Pi_2$: by intersecting the three quadrics containing $T$ we recover $\Pi_1$ and $\Pi_2$. We have $\dim(\cH_T)=2\cdot 9 + 18=36$.

Next, letting $V$ denote the open subset of $\bP^{55}$ that parametrizes smooth cubic fourfolds, we claim that the Hilbert scheme of pairs
\[
H := \{(T,X) \mid T\subset X\} \subset \cH_T  \times V
\]
is smooth and irreducible of dimension 55.  To see this, note that for the projection $\pi_1\colon H \to \cH_T$, the fiber $\pi_1^{-1}([T])$ is $\bP(\HH^0(\cI_{T}(3)))$. By Proposition~\ref{prop:two_planes} and the last part of Proposition~\ref{prop:homma_stuff}, it follows that $H$ is an open subset of a $\bP^{19}$-bundle over $\cH_T$, and is thus 55-dimensional. It is non-empty -- we write down an explicit smooth cubic fourfold containing one of our elliptic ruled surfaces in Section~\ref{sect:example}.

We show that, modding out by the action of $\PGL(6)$, the second projection $\pi_2\colon H \to V$ induces a dominant map $H /\!/ \PGL(6) \to \cC_{18}$ from a $20$-dimensional quasi-projective variety to $\cC_{18}$.  Together with Propositions~\ref{prop:two_planes} and~\ref{prop:homma_stuff}, this will establish all the statements in Theorem~\ref{theo:sexticcubic}.

The image of $\pi_2$ is 54-dimensional.  To see this, first note that $X \in \im\pi_2$ is a special cubic fourfold, and hence lies in the preimage of some $\cC_d$ under the quotient map $V \to V/\!/\PGL(6)$, which is 54-dimensional. On the other hand, since $\pi_2^{-1}(X) = \Hilb_X^{3n^2+3n}$, which has tangent space $\HH^0(\cN_{T/X})$ at $T$, we have
\[
h^0(\cN_{T/X}) \geq \dim \pi_2^{-1}(X) \geq \dim H - \dim \im \pi_2 \geq 1.
\]
By semicontinuity, if $h^0(\cN_{T/X}) = 1$ for one point $[T]$, then this dimension is generically $1$, and hence $\dim \pi_2^{-1}(X) = 1$ for a general $X$ in the image of $\pi_2$, showing that $\dim \im \pi_2 = 54$. We show this is the case with an explicit example using Macaulay2 in Section~\ref{sect:example}.

Finally, we claim that for $X \in \im \pi_2$, the image of $X$ under the quotient map $V \to V/\!/\PGL(6)$ lands in $\cC_{18}$. The intersection form on a smooth cubic fourfold $X$ containing $T$ is as follows:
\[ \begin{array}{c|cc}
	& h^2 & T  \\
\hline
h^2 & 3 & 6 \\
T   & 6 & 18 
\end{array} \]
Only $T^2 = 18$ needs to be justified.  By \cite[\S4.1]{Has00}, we have
\[ T^2 = 6h_T^2 + 3h_T.K_T + K_T^2 - \chi_T, \]
where $\chi_T$ is the topological Euler characteristic and $h_T$ is the hyperplane class restricted to $T$.  Because $T$ is an elliptic ruled surface, we have $K_T^2 = 0 = \chi_T$.
Moreover we have $K_T = -E_1 - E_2$: to see this, note that $K_T + E_1 + E_2$ has degree zero on the $\bP^1$ fibers of $T \to E$, hence is pulled back from $E$, and its restriction to a section ($E_1$ or $E_2$) is zero by adjunction, so $K_T + E_1 + E_2 = 0$.  Thus we have $h_T.K_T = -6$.

All together, we obtain the dominant map 
\[
H /\!/ \PGL(6) \to \cC_{18},
\]
which concludes the proof of Theorem~\ref{theo:sexticcubic}.
\end{proof}

\begin{rema}
The ruled surface $T \subset X$ determines an elliptic curve $E$ in the variety of lines $F_1(X)$.  By \cite[Cor.~5.1]{Ran}, such a curve moves in a family of dimension at least 1. Thus one expects that $T$ deforms inside $X$
in a one-parameter family.
\end{rema}

\section{Constructing the del Pezzo fibration}
\label{sect:constructing}

\begin{theo} \label{theo:fibration}
Let $X$ be a cubic fourfold containing an elliptic ruled surface $T$ as in Theorem~\ref{theo:sexticcubic}, and let
\[ \pi\colon X' := \Bl_T(X) \to \bP^2 \]
be the map induced by the linear system of quadrics containing $T$.  For generic $X$, the generic fiber of $\pi$ is a del Pezzo surface of degree 6, and the preimages of the curves $E_1, E_2 \subset T$ induce trisections of $\pi$.
\end{theo}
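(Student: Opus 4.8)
The plan is to realize the generic fiber as the surface residual to $T$ in a complete intersection and then to identify it by liaison. By Theorem~\ref{theo:sexticcubic} the quadrics through $T$ form a net $\bP(H^0(\cI_T(2))) = \bP^2$ whose base locus is $\Pi_1 \cup T \cup \Pi_2$. Since the generic $X \in \cC_{18}$ contains no plane, $\Pi_i \not\subset X$, so $X \cap \Pi_i$ is the plane cubic $E_i \subset T$; thus the base locus meets $X$ only along $T$, and after blowing up $T$ the net resolves to $\pi$ away from $T$. For a generic $p \in \bP^2$ the corresponding pencil is spanned by two generic members $Q, Q'$ of the net, so the fiber is the closure of $(X \cap Q \cap Q') \setminus T$; hence $Y := X \cap Q \cap Q'$ is a complete intersection of type $(3,2,2)$ containing $T$, and the generic fiber $S$ is its residual component, of degree $\deg Y - \deg T = 12 - 6 = 6$.

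Next I would run liaison on the link $Y = T \cup S$. As $Y$ is a complete intersection, $\omega_Y \cong \cO_Y(3+2+2-6) = \cO_Y(1)$, and residuation gives $\omega_S \cong \cI_{T/Y} \otimes \omega_Y$, which restricts to $\omega_S \cong \cO_S(1)(-C)$ with $C := T \cap S$; symmetrically $\omega_T \cong \cO_T(1)(-C)$. Feeding in $K_T = -E_1 - E_2$ from the proof of Theorem~\ref{theo:sexticcubic} yields $C \sim h_T + E_1 + E_2$ on $T$. The key point is to show $C \sim 2h_S$ on $S$: complete $Q, Q'$ to a basis $\{Q,Q',Q''\}$ of the net; then any point of $Q'' \cap S$ lies on all three quadrics, hence in the base locus $\Pi_1 \cup T \cup \Pi_2$, hence, being on $X$, in $T$, so $Q'' \cap S = S \cap T = C$. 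As $Q''|_S$ is an effective divisor of degree $12$ and $C$ is reduced of the same degree, $Q''|_S = C \sim 2h_S$. Therefore $\omega_S \cong \cO_S(1)(-C) \cong \cO_S(-1)$, so $-K_S = \cO_S(1)$ is ample with $K_S^2 = \deg S = 6$: the generic fiber is a sextic del Pezzo surface.

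For the trisections I would compute $C \cdot E_i$ on $T$. The preimage $\sigma^{-1}(E_i) \subset X'$ of $E_i$ under the blow-up $\sigma\colon X' \to X$ is a surface, and a generic fiber meets it exactly where the residual sextic $S$ meets $E_i$, namely in $(S \cap T) \cap E_i = C \cap E_i$. Since $\Pi_1 \cap \Pi_2 = \emptyset$ we have $E_1 \cdot E_2 = 0$, and $E_1^2 = E_2^2 = 0$ follows from the ruled-surface numerics (equivalently from $K_T^2 = (E_1+E_2)^2 = E_1^2 + E_2^2 = 0$ together with the symmetry between $\Pi_1$ and $\Pi_2$). Hence $C \cdot E_i = (h_T + E_1 + E_2) \cdot E_i = h_T \cdot E_i = 3$, so $\sigma^{-1}(E_i) \to \bP^2$ is generically three-to-one: the preimages of $E_1$ and $E_2$ are trisections of $\pi$.

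The main obstacle is genericity. I must ensure that for generic $X$, generic $p$, and generic $Q, Q'$, the scheme $Y$ is a reduced complete intersection linking $T$ to a smooth irreducible residual $S$, with $T$ and $S$ sharing no component and meeting transversally along the reduced curve $C$; only then do the liaison identities, the equality $Q''|_S = C$, and the transversality underlying $C \cdot E_i = 3$ hold as stated. I would secure these open conditions by a Bertini argument for generic $Q, Q'$ together with generic smoothness, and verify the finitely many remaining closed conditions on the explicit cubic fourfold of Section~\ref{sect:example}; that example simultaneously confirms that $\pi$ is a genuine sextic del Pezzo fibration rather than a degeneration.
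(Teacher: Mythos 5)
Your proposal is correct in outline and takes a genuinely different route from the paper's proof. Both arguments start from the same residual picture $Q_1 \cap Q_2 \cap X = T \cup S$ with $\deg S = 12 - 6 = 6$, and both turn on the same key observation that the third quadric of the net cuts $S$ exactly along $C = S \cap T$ (the paper's ``$D = S\cap T = S\cap Q_3$''), forced by the base locus $\Pi_1 \cup T \cup \Pi_2$ and the fact that $X \cap \Pi_i = E_i$. But you derive $K_S = h_S - C = -h_S$ by liaison/residuation on the complete intersection $Y$, and you obtain the trisections from the intersection numbers $C \cdot E_i = 3$ on $T$; the paper instead passes to $\Bl_T(\bP^5)$, proves that $V = Q_1 \cap Q_2$ has exactly six ordinary double points lying on $E_1 \cup E_2$, shows that the proper transform $V'$ is a small resolution, and applies Bertini to the basepoint-free system $|r^*(3H) - F|$; adjunction there gives $K_{S'} = (r^*H - F)|_{S'}$, and the six exceptional lines over the nodes are exactly the two trisections. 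The payoff of the paper's heavier route is precisely the input your route must postulate: smoothness of $S$ (including along $T$), reducedness of $C$, and genuine geometric linkage. Be aware that plain Bertini or generic smoothness \emph{cannot} supply smoothness of $S$ along $T$, since every member of the net contains $T$; this is exactly why the paper blows up $T$ first, making the system basepoint-free. Your fallback --- these are open conditions, so verify them on the explicit example of Section~\ref{sect:example} --- is logically legitimate and consistent with how the paper uses that example for Theorem~\ref{theo:sexticcubic} and Section~\ref{sect:fibrations}, but it then carries the real technical weight of the proof, which the paper's argument discharges theoretically. A side benefit of your route is the explicit class $C \sim h_T + E_1 + E_2$ on $T$, which recovers the degree and genus of $D$ noted in Remark~\ref{rema:D}.

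Two specific repairs are needed. First, $E_1^2 = E_2^2 = 0$ does not follow ``by symmetry'': an elliptic ruled surface with two disjoint sections can perfectly well have $E_1^2 = -E_2^2 \neq 0$, and for a fixed $T$ there is no automorphism exchanging $E_1$ and $E_2$. Argue instead that $E_1$ and $E_2$ are numerically equivalent: both are sections (so $E_i \cdot f = 1$ for the fiber class $f$) and both are plane cubics (so $h_T \cdot E_i = 3$), which forces $E_1 \equiv E_2$ in $\operatorname{Num}(T)$, whence $E_i^2 = E_1 \cdot E_2 = 0$; equivalently, $E \subset \Pi_1 \times \Pi_2$ has bidegree $(3,3)$, so in $T \cong \bP\bigl(\cO_E(-H_1) \oplus \cO_E(-H_2)\bigr)$ the two summands have equal degree. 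Second, the intersection number $C \cdot E_i = 3$ only bounds the trisection degree above by $3$, with equality when $C$ meets $E_i$ transversely for the generic fiber; this transversality is one more genericity condition to fold into your example check (the paper gets exactly three points because the three nodes of $V$ on each $E_i$ are pairwise distinct for generic $Q_1, Q_2$).
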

\begin{proof}
We will show that for a generic choice of two quadrics $Q_1, Q_2 \supset T$ and a cubic $X \supset T$, we have $Q_1 \cap Q_2 \cap X = T \cup S$, where $S$ is a del Pezzo surface of degree 6.

We have seen that any quadric containing $T$ also contains the two planes $\Pi_1, \Pi_2$ appearing in Theorem~\ref{theo:sexticcubic}.  But by Proposition~\ref{prop:homma_stuff} we know that $T$ is cut out by cubics, so a generic cubic containing $T$ does not contain $\Pi_1$ or $\Pi_2$.  Thus by Bertini's theorem, we can say that $Q_1 \cap Q_2 \cap X = T \cup S$, where $S$ is irreducible and smooth away from $T$.  First we argue that $S$ is in fact smooth everywhere, and that $S$ and $T$ meet transversely in a smooth curve $D$ (see Remark~\ref{rema:D}).

Let $V = Q_1 \cap Q_2$.  We claim that $V$ has ordinary double points
$$s_{11},s_{12},s_{13} \in \Pi_1 \cap T, \quad
s_{21},s_{22},s_{23} \in \Pi_2 \cap T.$$
(Compare \cite[Thm.~2.1]{Kap}.)  
It is helpful to keep in mind the characteristic example
\begin{align*}
\Pi_1=\{x_1=x_3=x_5=0\},\ & \Pi_2=\{x_0=x_2=x_4=0\} \subset & \\
V&=\{x_0x_1-x_2x_3=x_2x_3-x_4x_5=0\},
\end{align*}
which has ordinary singularities on the coordinate axes.
To see the claim, let
\[ \xymatrix{
\tilde\bP^5 \ar[d]_-q \ar[r]^-p & \Pi_1 \times \Pi_2 \\
{\phantom{,}}\bP^5,
} \]
be as in the proof of Proposition~\ref{prop:two_planes}, and let $W \subset \Pi_1 \times \Pi_2$ be the intersection of two $(1,1)$-divisors corresponding to $Q_1, Q_2$.  By adjunction, $W$ is a smooth sextic del Pezzo surface; the projections $W \to \Pi_i$ each contract three lines. Now the $\bP^1$-bundle
$$p_W\colon \tilde{V}:=p^{-1}(W) \ra W$$
resolves the singularities of $V$ via $q$; the proper
transforms of the $\Pi_i$ are sections of $p_W$. Thus $V$ has six
singularities, resolved by blowing up the $\Pi_i$. Our example
shows these are no worse than ordinary singularities.

Now consider the blow-up
\[ r\colon \Bl_T(\bP^5) \to \bP^5, \]
let $F \subset \Bl_T(\bP^5)$ be the exceptional divisor, and let $V' \subset \Bl_T(\bP^5)$ be the proper transform of $V$.  This is a small resolution of $V$; indeed, it is the flop of $\tilde V $ along the six exceptional lines.  Let $T' = V' \cap F$, which is the blow-up of $T$ at the six points $s_{ij}$.

Let $X'$ be a generic divisor in the linear system $|r^*(3H) - F|$. 
Proposition~\ref{prop:homma_stuff} guarantees that $T$ is cut out
by the cubics in its homogeneous ideal. Blowing up $T$ resolves 
the indeterminacy of this linear system, so it is basepoint-free.
Thus $X'$ is smooth, and moreover the surface $S' := X' \cap V'$ and the curve $D' := X' \cap T'$ are smooth.  We claim that $r$ maps $S'$ isomorphically onto $S = r(S')$, and $D'$ isomorphically onto $D = r(D')$.  For this it is enough to observe that $X'$ meets the exceptional lines of $V' \to V$ in one point each; otherwise one of the lines would be contained in the base locus of $|r^*(3H) - F|$, which is impossible.  (As $Q_1$ and $Q_2$ vary, these six points give rise to the two trisections of $\pi$.)

Finally, we argue that $S$ is a del Pezzo surface of degree 6.  By adjunction we have $K_{S'} = r^* H - F$, so $K_S = H - D$.  But $D = S \cap T = S \cap Q_3$, so $K_S = H - 2H = -H$, so $S$ is a del Pezzo surface.  And $S \cup T = Q_1 \cap Q_2 \cap X$ has degree 12, and $T$ has degree 6, so $S$ has degree 6.
\end{proof}

\begin{rema}\label{rema:D}
The curve $D = T \cap S$ appearing in the previous proof is smooth of degree $12$ and genus $7$.  To see this, note again that on $S$ we have $D \sim 2H$, so $D.H = 12$ and $\deg K_D = K_S.D + D^2 = 12$.
\end{rema}

\section{Rationality of sextic del Pezzo surfaces}
\label{sect:basics}

To determine when the total space of our fibrations in sextic del Pezzo surfaces
$\pi\colon X' \to \bP^2$ are rational, we review 
rationality properties of sextic del Pezzo surfaces in general.

Let $S$ be a del Pezzo surface of degree six over a perfect field $F$.
(In our application $F$ is the function field of $\bP^2$.)
Over an algebraic closure $\bar{F}$, the surface $\bar{S}=S_{\bar F}$ is
isomorphic to $\bP^2_{\bar F}$ blown up at three non-collinear points.  Famously, $\bar{S}$ contains a hexagon of lines ($(-1)$-curves), consisting of the exceptional divisors and the proper transforms of the lines joining pairs of the points.  The standard Cremona transformation along the three points gives a second realization as a blow-up of $\bP^2_{\bar F}$ and exchanges the two kinds of lines; we denote the two blow-ups by
$$\beta_1,\beta_2\colon \bar{S} \ra \bP^2.$$
There are three conic bundle structures on $\bar{S}$, which we denote by
$$\gamma_1,\gamma_2,\gamma_3\colon \bar{S} \ra \bP^1.$$
They correspond
to the pairs of opposite sides on the hexagon; removing the opposing
sides, the remaining four sides form the degenerate fibers of the
associated conic bundle.
The symmetry group of the hexagon is $\fD_{12}\simeq \fS_2\times \fS_3$,
where the factors act on the $\{\beta_i\}$ and $\{\gamma_j\}$
respectively.

The Galois action on the Picard group gives a representation
$$\rho_S\colon \Gal(\bar{F}/F) \ra \fS_2 \times \fS_3.$$
Let $K/F$ denote the quadratic \'etale
algebra associated with the first factor and $L/F$ a cubic \'etale
algebra associated with the second factor.  Blunk \cite{Blunk}
(see also \cite[\S 4]{CTKM} and \cite{Corn})
has studied Azumaya algebras $B/K$ and $Q/L$ with the following properties:
\begin{itemize}
\item
The Brauer-Severi variety $\BS(B)$ has dimension two over $K$,
and there is a birational morphism
$$S_K \ra \BS(B)$$
realizing $S_K$ as the blow-up over an effective zero-cycle of degree three;
\item
$\BS(Q)$ has dimension one over $L$, and there is a morphism
$$S_L \ra \BS(Q)$$
realizing $S_L$ as a conic fibration with two degenerate fibers;
\item
the corestrictions $\cor_{K/F}(B)$ and $\cor_{L/F}(Q)$ are split
over $F$;
\item
$B$ and $Q$ both contain a copy of the compositum $KL$,
and thus are split over $KL$.
\end{itemize}

\noindent The following proposition is used in Section \ref{sect:rationality} to prove Theorem \ref{theo:main}.

\begin{prop} \label{prop:rational_section}
The following are equivalent:
\begin{enumerate}
\item $S$ is rational over $F$;
\item $S$ admits an $F$-rational point;
\item $S$ admits a zero-cycle of degree prime to six;
\item the Brauer classes $B$ and $Q$ are trivial.
\end{enumerate}
\end{prop}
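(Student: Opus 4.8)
The plan is to prove the cycle of implications $(1)\Rightarrow(2)\Rightarrow(3)\Rightarrow(4)\Rightarrow(1)$, where the first two steps and the splitting criterion are essentially formal and the final implication carries the geometric content. For $(1)\Rightarrow(2)$ I would invoke the Lang--Nishimura lemma: rationality gives a birational map $\bP^2_F \dashrightarrow S$, the source $\bP^2_F$ has a smooth $F$-point, and $S$ is proper, so $S(F)\neq\emptyset$ (this works uniformly over finite and infinite fields). The implication $(2)\Rightarrow(3)$ is immediate, an $F$-point being a zero-cycle of degree $1$.

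The step $(3)\Rightarrow(4)$ I would prove by an index computation. Since $\BS(B)$ is a Severi--Brauer surface, $B$ is the class of a degree-$3$ algebra and $\operatorname{ind}(B)\in\{1,3\}$; since $\BS(Q)$ is a conic, $\operatorname{ind}(Q)\in\{1,2\}$; and each class is split exactly when its index is $1$. Let $z$ be a zero-cycle on $S$ with $\deg z=d$ and $\gcd(d,6)=1$. Base change to $K$ preserves degree, so $z_K$ is a zero-cycle of degree $d$ on $S_K$ over $K$; pushing it forward along the birational morphism $S_K\to\BS(B)$ again preserves degree (a point maps to a point, with degree scaled by the residue-field extension), yielding a zero-cycle of degree $d$ on $\BS(B)$. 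As the index of a Brauer class divides $[\kappa(p):K]$ for every closed point $p$ of its Brauer--Severi variety, it divides the degree of any zero-cycle; hence $\operatorname{ind}(B)\mid d$, and since $\operatorname{ind}(B)\mid 3$ with $3\nmid d$ we get $\operatorname{ind}(B)=1$. The identical argument with $S_L\to\BS(Q)$ (now a conic fibration, but zero-cycles still push forward preserving degree) gives $\operatorname{ind}(Q)\mid d$, and $2\nmid d$ forces $\operatorname{ind}(Q)=1$. Thus both classes split. (When $K$ or $L$ is not a field, one runs the same argument componentwise.)

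The real work is $(4)\Rightarrow(1)$, which I expect to be the main obstacle. Assuming $B$ and $Q$ split, one must exhibit a birational map $S\dashrightarrow\bP^2_F$, and the natural idea is to descend a geometric structure from $\bar S$ to $F$ using the vanishing of the Brauer obstruction, splitting into cases according to whether $K$ and $L$ are fields. The transparent case is $K$ split: then $\rho_S$ lands in $\fS_3$, the two contractions to $\BS(B)$ are defined over $F$, triviality of $B$ identifies $\BS(B)$ with $\bP^2_F$, and $S$ becomes a blow-up of $\bP^2_F$ (a blow-up at a closed point of degree $3$ when $L$ is a field), hence rational. When $K$ is a field no $F$-rational contraction to a plane exists, and one must instead use the conic-bundle side: this is available over $F$ when $L$ is not a field, where triviality of $Q$ makes the base $\BS(Q)$ a projective line and rationality follows from analyzing the two degenerate fibers, but it forces a subtler model when $K$ and $L$ are \emph{both} fields. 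This last case is the crux; here I would invoke Blunk's analysis \cite{Blunk} of the algebras $B$ and $Q$, whose compatibility conditions ($\cor_{K/F}(B)$ and $\cor_{L/F}(Q)$ split, and $B$, $Q$ split over $KL$) are precisely calibrated to produce the birational parametrization establishing rationality.

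Finally, an alternative organization replaces $(4)\Rightarrow(1)$ by $(4)\Rightarrow(2)$ together with the classical theorem that a sextic del Pezzo surface carrying an $F$-rational point is rational; but constructing the point passes through the same case analysis on $K$ and $L$, so the direct birational route via Blunk seems preferable.
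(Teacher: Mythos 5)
Your overall skeleton matches the paper's proof, and the parts you work out in detail are sound. Your $(3)\Rightarrow(4)$ argument --- pushing a zero-cycle of degree prime to six forward along $S_K \to \BS(B)$ and $S_L \to \BS(Q)$, then using that the index of a Brauer class divides the degree of any closed point of its Brauer--Severi variety --- is exactly the paper's sketch, fully fleshed out; and Lang--Nishimura for $(1)\Rightarrow(2)$ is fine (the paper treats this implication as clear). The genuine divergence is the final implication, and there your preferred organization is weaker than the one you set aside. The paper proves $(4)\Rightarrow(2)$ by citing \cite[Cor.~3.5]{Blunk} and $(2)\Rightarrow(1)$ by citing Manin's theorem that a sextic del Pezzo surface with a rational point is rational \cite[Cor.~1 to Thm.~3.10]{ManinIHES}; both results hold uniformly in $K$ and $L$, so no case analysis is needed --- contrary to your closing claim that ``constructing the point passes through the same case analysis.'' Your direct route $(4)\Rightarrow(1)$, by contrast, has a real soft spot: in the case where $K$ is a field and $L$ is not, the assertion that ``rationality follows from analyzing the two degenerate fibers'' is not an argument --- one must still show the generic fiber of the conic bundle $S \to \bP^1_F$ has a point over $F(\bP^1_F)$, which is itself a nontrivial Brauer-group computation (Faddeev reciprocity plus input from the triviality of $B$), not a formality; and in the crux case where $K$ and $L$ are both fields you simply defer to Blunk, which is the paper's citation in disguise. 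So the clean fix is to adopt the ``alternative organization'' of your last paragraph: it is precisely the paper's proof, it is complete as stated, and it subsumes all of your cases at once.
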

\begin{proof}
The implications $(1) \Rightarrow (2) \Rightarrow (3)$ are clear.  The implication $(2) \Rightarrow (1)$ is \cite[Cor.~1 to Thm.~3.10]{ManinIHES}.  The implication $(3) \Rightarrow (4)$ is straightforward: note that $B$ is trivial if and only if $\BS(B)$ admits a zero-cycle of degree prime to 3, that $Q$ is trivial if and only if $\BS(Q)$ admits a zero-cycle of odd degree, and that $S_K$ maps to $\BS(B)$ and $S_L$ maps to $\BS(Q)$.  The implication $(4) \Rightarrow (2)$ is \cite[Cor.~3.5]{Blunk}.
\end{proof}

\begin{rema}
Blunk's classification \cite[Th.~3.4]{Blunk} implies
that $S$ may have maximal Galois
representation while $B$ and $Q$ are trivial. This may occur
over any field $F$ admitting a surjective representation
$$\rho\colon \Gal(\bar{F}/F) \ra \fS_2 \times \fS_3.$$
For instance, we could take $F=\bC(t)$ over which $B$ and $Q$
are necessarily trivial, as the Brauer group of 
a complex curve vanishes.
\end{rema}

\section{Rationality of our cubic fourfolds}
\label{sect:rationality}

\begin{proof}[Proof of Theorem~\ref{theo:main}]
Let $U \subset \cC_{18}$ be a Zariski open set parametrizing cubics fourfolds as in Theorem~\ref{theo:fibration}.  Given $X \in U$, we have the blow-up $r\colon X' \to X$ along $T$ and the sextic del Pezzo fibration $\pi\colon X' \to \bP^2$.  Let $S' \subset X'$ be a smooth fiber of $\pi$, and let $S = r(S')$.  We claim that if there is a cohomology class $\Sigma \in H^4(X,\bZ) \cap H^{2,2}(X)$ with $\Sigma.S = 1$, then $X$ is rational.  The integral Hodge conjecture holds for cubic fourfolds \cite[Thm.~1.4]{VoisinJAG}, so we can promote $\Sigma$ to an algebraic cycle.  Then $\Sigma' = r^* \Sigma$ satisfies $\Sigma'.S' = 1$ by the projection formula.  Passing to the generic fiber of $X'$, which is a smooth del Pezzo surface over the function field $\bC(\bP^2)$, we see that $\Sigma'$ defines a zero-cycle of degree 1, so Proposition~\ref{prop:rational_section} implies that $X'$ is rational over $\bC(\bP^2)$, hence over $\bC$, so $X$ is rational over $\bC$.

Thus our goal is to produce a countable dense set of divisors
$$\cC_K \subset \cC_{18}$$
such that if $X$ is in some $\cC_K$ then there is a Hodge class $\Sigma$ with $\Sigma.S = 1$.

Let $h \in H^2(X,\bZ)$ be the hyperplane class, let $L$ be the lattice underlying $H^4(X,\bZ)$, and let 
$L^0 = \langle h^2 \rangle^\perp \subset L$
be the primitive cohomology, which may be expressed \cite[Prop.~2.1.2]{Has00}:
\begin{equation} \label{eqn:lattice}
L^0 \simeq \left( \begin{matrix} 2 & 1 \\ 1 & 2 \end{matrix} \right)
\oplus \left( \begin{matrix} 0 & 1 \\  1 & 0  \end{matrix} \right)
\oplus \left( \begin{matrix} 0 & 1 \\  1 & 0  \end{matrix} \right)
\oplus E_8^{\oplus 2},
\end{equation}
where $E_8$ is the positive definite quadratic form associated with
the corresponding Dynkin diagram. Note that $L^0$ is even.

Consider positive definite rank-three extensions
\begin{equation} \label{eqn:rankthree}
\begin{array}{c|cc}
   & h^2 & S \\
\hline
h^2   &  3 & 6 \\
S   &  6 & 18 
\end{array} \subset 
K_{a,b} := \begin{array}{c|ccc}
       	& h^2 & S & \Sigma \\
\hline
h^2 & 3 & 6 & a \\
S  & 6 & 18 & 1 \\
\Sigma & a & 1 & b
\end{array}
\end{equation}
with discriminant 
$$\Delta=-3+12a-18a^2+18b.$$
The lattice $\langle h^2 \rangle^\perp \subset K_{a,b}$ is
even if and only if $a\equiv b \pmod{2}$.
We assume this parity condition from now on.

Replacing $\Sigma$ with $\Sigma+m(3h^2-S)$ for a suitable
$m \in \bZ$, we may assume that $a=-1,0,1$. Thus positive integers
$\Delta\equiv 9\pmod{12}$ arise as discriminants,
each for precisely one lattice, denoted $K_{\Delta}$ from
now on.

We claim that the embedding 
$$\langle h^2,S\rangle \hookrightarrow L$$
extends to an embedding of $K_{a,b}$ in $L$; compare \cite[\S4]{Has99}.
This is an application of Nikulin's results on embeddings of lattices \cite[\S1.14]{Nik}, 
which imply there is a unique vector
$v\in L^0$ satisfying
$$ v.v = 6, \ v.L^0 = \bZ,$$
modulo the automorphisms of $L^0$ acting trivially on the discriminant
group $d(L^0)=\Hom(L^0,\bZ)/L^0$. Thus we may assume
$$v:=S-2h^2=e_1+3f_1$$ 
where $\{e_j,f_j\},j=1,2,$ are the bases for the first and second
hyperbolic summands in (\ref{eqn:lattice}), respectively. 
For the case $a=0$ we may take $\Sigma=f_1+e_2+\frac{b}{2}f_2$,
as $b$ is necessarily even.
In the remaining cases, let $\{g_1,g_2\}$ be a basis for the first
summand in (\ref{eqn:lattice}) so that $g_1.g_1=g_2.g_2=2$, 
$g_1.g_2=1$, and 
$$w:=\frac{h^2+g_1+g_2}{3} \in L.$$
We are fixing the isomorphism $d(\langle h^2\rangle )\simeq d(L^0)$,
whose graph induces $L$ as an extension of $\langle h^2\rangle\oplus L^0$.
For $a=1$ we may take $\Sigma = w-f_1+e_2+\frac{b-1}{2}f_2$
and for $a=-1$ we choose $\Sigma = -w + e_1 + e_2+ \frac{b-1}{2}f_2$.
Here $b$ must be odd.

Excluding finitely many small $\Delta$, the lattice $K_{\Delta}$ defines a divisor
$$\cC_{K_{\Delta}}\subset \cC_{18}.$$
See \cite[\S 2.3]{Has16} for details on which $\Delta$ must be excluded, and \cite[\S4]{AT} for details of a similar calculation in $\cC_8$.

The divisors $\cC_{K_\Delta}$ intersecting $U$ are the ones we want.  The density in the Euclidean topology follows from the Torelli theorem \cite{Voisin86} and from \cite[5.3.4]{voisin-book}.
\end{proof}

Let $X$ be a cubic fourfold containing an elliptic ruled surface $T$ as in Theorem~\ref{theo:sexticcubic}, and let $\pi\colon X' \to \bP^2$ be the associated fibration into del Pezzo surfaces $S$ of degree $6$.  A {\em marking} of such a cubic
fourfold is a choice of lattice (see \cite[\S 3.1,5.2]{Has00})
$$\langle h^2,T\rangle =\langle h^2,S=4h^2-T\rangle  \subset A(X):= H^4(X,\bZ) 
\cap H^{2,2}(X)$$
associated with such a fibration. The marking data distinguishes
the classes $T$ and $S$ which are in the same orbit under the
monodromy action \cite[Prop.~5.2.1]{Has00}.

\begin{prop}[Hodge-theoretic interpretation] \label{prop:HTI}
Let $(X,T)$ be a marked cubic fourfold of discriminant $18$, and let
$\Lambda$ be the Hodge structure on the orthogonal complement of the 
marking lattice.
Then there exists an embedding of polarized Hodge structures
$$\Lambda(-1) \hookrightarrow H^2_\textrm{prim}(Y',\bZ),$$
where $(Y',f')$ is a polarized K3 surface of degree two
and the image of $\Lambda(-1)$ is an index-three sublattice.
This sublattice may be expressed in the form
\[
\langle\eta'\bmod 3\rangle^\perp,
\]
for some $\eta' \in H^2(Y',\bZ)$ whose image in $H^2(Y',\bZ/3\bZ)/\langle f'\rangle$ 
is isotropic under the intersection form modulo $3$.  

If $A(X)$ has rank $2$, then 
the class $\eta'$ gives rise to a non-trivial element in $\Br(Y')[3]$.
We may choose $(Y',f')$ and $\eta'$ so that 
$\eta' \equiv 0 \in \Br(Y')$ when the sextic del Pezzo
surface fibration
$\pi\colon X' \ra \bP^2$ admits a section $\Sigma$.
\end{prop}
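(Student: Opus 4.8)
The plan is to run the now-standard ``associated K3'' argument in its twisted (order-three) form: realize the weight-two Hodge structure $\Lambda(-1)$ as an index-three sublattice of the primitive cohomology of a genuine degree-two K3 surface, and extract the Brauer class from the discriminant data of that inclusion. Throughout, $\Lambda = \langle h^2, S\rangle^\perp \subset H^4(X,\bZ)$ carries a polarized weight-four Hodge structure with $h^{3,1}=1$, so $\Lambda(-1)$ is of K3 type, with signature $(2,19)$.

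First I would do the lattice bookkeeping. Because $H^4(X,\bZ)$ is unimodular and the labelling lattice $K=\langle h^2,S\rangle$, with Gram matrix $\left(\begin{smallmatrix}3&6\\6&18\end{smallmatrix}\right)$, is primitive, the discriminant form of $\Lambda$ is $q_\Lambda = -q_K$. The factorization $K \cong K_0(3)$ with $K_0 = \left(\begin{smallmatrix}1&2\\2&6\end{smallmatrix}\right)$ of discriminant $2$ makes the structure transparent: $A_\Lambda \cong (\bZ/3)^2 \oplus \bZ/2$, the $(\bZ/3)^2$-part carrying a split form and the $\bZ/2$-part carrying the form of $\langle -2\rangle$. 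I would locate an isotropic element of order three generating a subgroup $H \cong \bZ/3 \subset (\bZ/3)^2$ (such an element exists since $S^2=18$ makes the class of $\tfrac13 S$ isotropic), and form the even overlattice $M \supset \Lambda(-1)$ of index three determined by $H$. Isotropy of $H$ for the quadratic form is exactly what keeps $M$ even, and it forces $A_M \cong H^\perp/H \cong \bZ/2$ with the discriminant form of a degree-two K3 surface. Nikulin's existence-and-uniqueness theorem for indefinite even lattices (the rank is comfortably large relative to the length of the discriminant group) then identifies $M$ with $\langle f'\rangle^\perp$ inside the K3 lattice $U^3 \oplus E_8(-1)^2$, where $(f')^2 = 2$; this is the target $H^2_{\mathrm{prim}}(Y',\bZ)$, together with the asserted index-three inclusion.

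Next I would upgrade this to Hodge structures. Since $\Lambda(-1)\otimes \mathbb{Q} = M\otimes \mathbb{Q}$, the K3-type Hodge structure on $\Lambda(-1)$ transports to $M$; declaring $\mathbb{C} f'$ to be of type $(1,1)$ extends it to a polarized weight-two Hodge structure of K3 type on all of $U^3\oplus E_8(-1)^2$, with $f'$ algebraic of square two. Surjectivity of the period map for degree-two K3 surfaces together with the Torelli theorem then produces a polarized K3 surface $(Y',f')$ whose period is this Hodge structure; I would use the freedom in the choice of Weyl chamber to arrange that $f'$ is a genuine polarization. The construction of $M$ then furnishes the Hodge-isometric embedding $\Lambda(-1)\hookrightarrow H^2_{\mathrm{prim}}(Y',\bZ)$ of index three.

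Finally, the twist. The inclusion $\Lambda(-1)\subset M=H^2_{\mathrm{prim}}(Y')$ is the kernel of a surjection $\psi\colon M \to \bZ/3$, and evenness of $M$ lets me write $\psi(x)=\langle x,\eta'\rangle \bmod 3$ for a lift $\eta'\in H^2(Y',\bZ)$; since $\psi$ is defined on $f'^\perp$, the class $\eta'$ is well-defined only modulo $\langle f'\rangle$, giving the image in $H^2(Y',\bZ/3)/\langle f'\rangle$, and the isotropy of $H$ becomes the isotropy of $\eta'$ modulo three. This presents $\Lambda(-1)$ as the mod-three orthogonal complement $\langle\eta'\rangle^\perp$. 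When $A(X)$ has rank two, the transcendental lattice of $X$ has full rank, forcing $\NS(Y') = \bZ f'$ and hence $T(Y') = H^2_{\mathrm{prim}}(Y') = M$; then $\psi$ is a nonzero element of $\Hom(T(Y'),\bZ/3)$, so under the identification $\Br(Y') \cong \Hom(T(Y'),\mathbb{Q}/\bZ)$ it yields a nontrivial class in $\Br(Y')[3]$. I expect the lattice step to be the main obstacle: one must compute $q_\Lambda$ precisely enough to guarantee an isotropic $\bZ/3$, match $A_M$ with the degree-two K3 form exactly, and extend the primitive embedding to the full K3 lattice carrying $f'$ — all resting on careful application of Nikulin's machinery, with the genericity of $X$ entering to secure ampleness of $f'$ and the rank computation for $\NS(Y')$.
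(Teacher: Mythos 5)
Your proposal is correct, and it follows the same overall strategy as the paper---reduce everything to lattice theory, invoke Nikulin, realize the resulting lattice data on an honest degree-two K3, and read the Brauer class off the $3$-torsion of (co)homology---but it executes the key existence step differently. The paper outsources that step to Theorem~9 of \cite{MKSTVA}, which supplies polarized degree-two K3 surfaces $(Y',f')$ whose primitive cohomology contains an index-three sublattice $\Gamma$ with the same rank and discriminant group as $\Lambda(-1)$, and then applies Nikulin's uniqueness to the rank-$21$ lattices $\Lambda(-1)$ and $\Gamma$ (discriminant group $\bZ/3\oplus\bZ/6$). You instead build the target lattice yourself: you take the index-three \emph{overlattice} $M\supset\Lambda(-1)$ determined by an isotropic order-three element of the discriminant group (your $\tfrac13 S$ computation), apply Nikulin to identify $M$ with $\langle f'\rangle^\perp$ (note the $\bZ/2$ discriminant form is forced to be the degree-two one by the signature-mod-$8$ constraint), and then explicitly invoke surjectivity of the period map and Torelli to produce $(Y',f')$ with $f'$ ample---a Hodge-theoretic step the paper leaves implicit inside its citation. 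In effect you re-prove the quoted theorem, so your argument is more self-contained, at the cost of more lattice bookkeeping. Your extraction of the Brauer class via $\Br(Y')\cong\Hom(T(Y'),\mathbb{Q}/\bZ)$ is equivalent to the paper's exponential-sequence/snake-lemma argument. Two details to tighten: (i) since $K=\langle h^2,S\rangle$ is an \emph{odd} lattice, the identity $q_\Lambda=-q_K$ of $\mathbb{Q}/2\bZ$-valued discriminant forms needs care (in general only the bilinear discriminant forms match modulo $\bZ$); this is harmless here because on order-three elements isotropy for $b$ mod $\bZ$ and for $q$ mod $2\bZ$ coincide, but it should be said. (ii) It is not ``evenness of $M$'' that produces $\eta'$: the lift exists because unimodularity of $H^2(Y',\bZ)$ and primitivity of $\langle f'\rangle$ give the perfect pairing identifying $H^2(Y',\bZ)/\langle f'\rangle$ with $\Hom(\langle f'\rangle^\perp,\bZ)$, which is exactly the identification the paper quotes from \cite{vanGeemen}.
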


\begin{proof}
The discriminant group of the lattice $\Lambda(-1)$ is isomorphic to $\bZ/3\bZ \oplus \bZ/6\bZ$ \cite[Prop.~3.2.5]{Has00} and Theorem~9 in \cite{MKSTVA} shows 
that there exist polarized K3 surfaces $(Y',f')$ of degree two such that $H^2_\textrm{prim}(Y',\bZ)$ contains a lattice $\Gamma$ of index three having the same rank and discriminant group as $\Lambda(-1)$.  The lattices $\Lambda(-1)$ and $\Gamma$ are even, indefinite and have few generators for their discriminant groups relative to their rank (two generators versus rank $21$). Results of Nikulin~\cite{Nik} now imply they are isometric.  The isomorphism
\begin{align*}
	H^2(Y',\bZ)/\langle f'\rangle \otimes \bZ/3\bZ &\xrightarrow{\sim} \Hom(\langle f'\rangle^\perp,\bZ/3\bZ) \\
	v\otimes 1 &\mapsto [t \mapsto v.t \bmod 3]
\end{align*}
identifies $\Gamma$ with the kernel of a map $\langle f'\rangle^\perp \to \bZ/3\bZ$ corresponding to some $\eta'\otimes 1$, so $\Gamma = \{t \in \langle f'\rangle^\perp : t.\eta' \in 3\bZ\}$; see~\cite[\S2.1]{vanGeemen} for details.  

If $A(X)$ has rank $2$, then $\NS(Y') = \langle f'\rangle$. The exponential sequence then gives rise to the following commutative diagram with exact rows:
\[
\xymatrix{
0 \ar[r] & H^2(Y',\bZ)/\langle f'\rangle \ar[r]\ar[d]^{\times 3} & H^2(Y',\cO_{Y'}) \ar[r]\ar[d]^{\times 3} & H^2(Y',\cO^*_{Y'}) \ar[r]\ar[d]^{\times 3} & 0 \\
0 \ar[r] & H^2(Y',\bZ)/\langle f'\rangle \ar[r] & H^2(Y',\cO_{Y'}) \ar[r] & H^2(Y',\cO^*_{Y'}) \ar[r] & 0
}
\]
The snake lemma then shows that to $[\eta'] \in H^2(Y',\bZ/3\bZ)/\langle f'\rangle$ there corresponds a class in $\Br(Y')[3]$.

For the last assertion, 
we compute discriminant groups in the notation developed above.
Write
$$d(\langle h^2, S\rangle) \simeq d(\Lambda) = \langle u_1,u_2\rangle$$
where
$$u_1 = \frac{1}{3}h^2 \simeq \frac{1}{3}(g_1+g_2)$$
and 
$$
u_2 = \frac{1}{6}(S-2h^2) = \frac{1}{6}(e_1+3f_1)\simeq \frac{1}{6}(3f_1-e_1).
$$
The $(\bQ/2\bZ)$-valued quadratic form on $d(\Lambda)$ is
$$u_1.u_1=\frac{2}{3}, \ u_1.u_2=0, \ u_2.u_2 = -\frac{1}{6}.$$
Nikulin's theory gives that even overlattices 
$$\Lambda' \supset \Lambda, \ |\Lambda'/\Lambda|=3$$
correspond to isotropic subgroups
$$ H \subset d(\Lambda), \ |H|=\bZ/3\bZ.$$
Note that $\Lambda'$ -- after a shift in weight and associated
sign reversal -- is the primitive cohomology of a degree two K3 surface.
There are two such subgroups, generated by $u_1+2u_2$ and $u_1-2u_2$;
we fix $H=\langle u_1-2u_2\rangle$. 

The class $\eta'\equiv 0$ in the Brauer group if it
can be represented by a divisor class.
This is the case provided
there is an algebraic class $\Xi \in L$ mapped, via the natural map
$$L \ra d(\langle h^2, S\rangle),$$ 
to a class $\bar{\Xi}$ such that 
$$\bar{\Xi} . H \neq  0 \in \bQ/\bZ.$$
Indeed, a suitable integral multiple of $\Xi$ projects
to a $\eta\in \Lambda'$ such that
$$\Lambda = \{t\in \Lambda': t.\eta \equiv 0\bmod{3} \}.$$
Using (\ref{eqn:rankthree}) we find that
$$\bar{\Sigma}=au_1 + (1-2a)u_2$$
and 
$$(au_1+(1-2a)u_2).(u_1-2u_2)=1/3.$$
\end{proof}

Most of 
Proposition~\ref{prop:HTI} could also have been obtained using the results on Mukai lattices in \cite[\S2]{huybrechts}.

\section{Singular fibers and discriminant curves}
\label{sect:fibrations}

In this section we study the locus where fibers of $\pi\colon X' \to \bP^2$ degenerate, and its relation to the Azumaya algebras $B/K$ and $Q/L$ of Section~\ref{sect:basics}.
\bigskip

We work over an algebraically closed field. Let $S$ be a {\em singular
del Pezzo surface} \cite[p.~29]{CT88}, essentially, a normal projective
surface with ADE singularities and ample anticanonical class.
Those of degree six ($K_S^2=6$) are classified as
follows \cite[Prop.~8.3]{CT88}:
\begin{itemize}
\item{type I: $S$ has one $A_1$ singularity and is obtained
by blowing up $\bP^2$ in three collinear points, and blowing down
the proper transform of the line containing them;}
\item{type II: $S$ also has one $A_1$ singularity and is obtained by
blowing up $\bP^2$ in two infinitely near points and a third point
not on the line associated with the infinitely near points,
then blowing down the proper transform of the first exceptional
divisor over the infinitely near points;}
\item{type III: $S$ has two $A_1$ singularities and is obtained by blowing
up two infinitely near points and a third point all contained in a line,
and blowing down the proper transforms of the first exceptional divisor
over the infinitely near point and the line;}
\item{type IV: $S$ has an $A_2$ singularity and is obtained by
blowing up a curvilinear subscheme of length three not contained in a 
line and blowing down the first two exceptional divisors;}
\item{type V: $S$ has an $A_1$ and and $A_2$ singularity and is obtained
by blowing up a curvilinear subscheme contained in a line, and blowing
down the proper transforms of the first two exceptional divisors and the
line.}
\end{itemize}
The classification proceeds by analyzing the $(-2)$-classes 
contracted in each case and the subgroups of 
the Weyl group $\fS_2\times \fS_3$
generated by the corresponding roots.
Types I and II correspond to conjugacy 
classes of transpositions in the $\fS_2$ and $\fS_3$
factors.
Type III corresponds to conjugacy
classes of Klein four-groups.
Type IV corresponds to the $\fS_3$-factor and 
type V to the
the full group. 
Each (possibly) singular sextic del Pezzo surface anti-canonically
embeds in $\bP^6$ \cite[Prop.~0.6]{CT88}.

Let $\cDP$ denote the moduli stack of (possibly)
singular sextic del Pezzo surfaces. 
It is a non-separated 
Artin stack of dimension $-2$, as all the surfaces
have automorphism groups of dimension at least two.
The stack $\cDP$ may be constructed
as a quotient of the
Hilbert scheme parametrizing sextic surfaces in $\bP^6$ anti-canonically
embedded with
the requisite singularities.
Another atlas is given by taking anti-canonical images of iterated blow-ups
of $\bP^2$ along three points, as described above.
Both presentations show that types I and II occur in codimension one, types
III and IV in codimension two, and type V in codimension three.

\begin{defi} \label{defi:good}
Let $P$ be a smooth complex projective surface. 
A {\em good sextic del Pezzo fibration} consists of a smooth
fourfold $\cS$ and a flat projective morphism $\pi\colon \cS \ra P$
such that the
fibers of $\pi$ are either smooth or singular sextic del Pezzo surfaces.
Let $B_I, \dotsc, B_V$ denote the closure of the corresponding loci in $P$;
we require that:
\begin{itemize}
\item $B_I$ is a non-singular curve.
\item $B_{II}$ is a curve, non-singular away from $B_{IV}$.
\item $B_{III}$ is finite and coincides with the intersection of $B_I$ and $B_{II}$, which is transverse.
\item $B_{IV}$ is finite, and $B_{II}$ has cusps at $B_{IV}$.
\item $B_{V}$ is empty.
\end{itemize}
We allow $B_I,B_{II},B_{III},$ or $B_{IV}$ to be empty.
\end{defi}
We call $B_I\cup B_{II}$ the {\em discriminant divisor} of $\pi$;
our usage is consistent with the standard notion of the discriminant
of a family of varieties with isolated hypersurface singularities.
Given such a family with smooth total space, the multiplicity
of the discriminant at a point equals the sum of the Milnor numbers of the
singularities of the corresponding fiber \cite[2.8.3]{Teissier}.
Thus the conditions on the singularities of the discriminant force
the singularities of the fibers to be mild.

Here is some motivation for Definition~\ref{defi:good}:
\begin{prop} \label{prop:versality}
Assume the characteristic is not $2$ or $3$.
Under the conditions of Definition~\ref{defi:good},
the classifying map $P\rightarrow \cDP$ is transverse to each
equisingular stratum,
i.e., the localization of $P$ along each stratum is versal for the
corresponding singularities.
Thus good sextic del Pezzo fibrations are Zariski
open in the moduli space of all sextic del Pezzo fibrations with fixed
invariants.
\end{prop}
This is an application of the following:
\begin{lemm}
Assume the characteristic is not $2$ or $3$.
Let $P$ be a smooth surface and $\pi\colon \cS \ra P$ a flat morphism
of relative dimension two such that
the fibers of $\pi$ are smooth, have one or two $A_1$ singularities,
or have one $A_2$ singularity. Then the following are equivalent:
\begin{enumerate}
\item{$\pi$ is versal for each equisingular singular stratum;}
\item{$\pi$ satisfies the following:
\begin{itemize}
\item{the discriminant of $\pi$ in $P$ is reduced;}
\item{the discriminant has nodes at points with two $A_1$ singularities
and cusps at points with $A_2$ singularities.}
\end{itemize}}
\end{enumerate}
The condition that the discriminant of $\pi$ is reduced
follows from the smoothness of the fourfold $\cS$.
Either of the enumerated conditions remains valid for 
small deformations of $(P,\cS,\pi)$.
\end{lemm}
\begin{proof}
The last statement is `openness of versality' \cite[Th.~4.4]{Art74};
it suffices then to focus on the formal deformations.

The versal deformation of an $A_1$
surface singularity takes the form
$$xy+z^2 = t,$$
where the base of the deformation is the spectrum of a complete
discrete valuation ring with uniformizer $t$, which is the discriminant.
A general one-parameter deformation with parameter $\tau$
is formally equivalent to
\begin{equation} \label{eqn:general}
xy+z^2 = f(\tau), f(0)=0,
\end{equation}
with $t=f(\tau)$ the classifying map. 
The following are equivalent:
\begin{itemize}
\item{$f$ is unramified;}
\item{the deformation (\ref{eqn:general}) is versal;}
\item{the discriminant of (\ref{eqn:general}) is reduced;}
\item{the total space of (\ref{eqn:general}) is smooth.}
\end{itemize}
When there are two $A_1$ singularities, the versality condition is that
the corresponding branches of the discriminant are transverse,
whence the nodes.

We focus on the $A_2$ case; we refer the reader to \cite{GK}
for the classification in arbitrary characteristic
and \cite[p.~434]{DH88} for the explicit
computation of the discriminant of a versal deformation.
Here we use the assumption on the characteristic. It remains to check
that a deformation of an $A_2$ singularity with (reduced) cuspidal 
discriminant is necessarily versal. Let $0\in P$ be the corresponding
point in the base. If versality fails then
the classifying
map to the versal deformation ramifies at $0$ but not at the 
generic point of the discriminant. This means there exists
a smooth curve $0 \in C \subset S$ such that the classifying map 
takes $C$ birationally onto its image but ramifies at 0.
Since the discriminant is cuspidal, $C$ may be chosen to intersect
the discriminant with multiplicity two or three. 
The image of $C$ in the versal deformation is singular and
thus meets the discriminant in degree at least four, a contradiction.
\end{proof}

The singularity analysis guarantees that, for good sextic del Pezzo fibrations, the local monodromies at each stratum are
maximal. At generic points of $B_I$ and $B_{II}$ we have order-two monodromy,
at points of $B_{III}$ we get $\fS_2 \times \fS_2$, and at points of
$B_{IV}$ we see $\fS_3$. 

Recall the notation introduced in Section~\ref{sect:basics}:
\begin{prop} \label{prop:blunk}
Let $\pi\colon \cS\ra P$ be a good sextic del Pezzo fibration
over $\bC$. Then Blunk's construction yields:
\begin{itemize}
\item{a non-singular double cover $Y \ra P$ branched along $B_I$,
representing $K$;}
\item{a subgroup $\langle\eta\rangle \subset \Br(Y)[3]$, representing $B$;}
\item{a non-singular degree-three cover $Z \ra P$ branched along $B_{II}$,
representing $L$;}
\item{an element $\zeta \in \Br(Z)[2]$, representing $Q$.}
\end{itemize}
\end{prop}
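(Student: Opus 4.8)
The plan is to produce everything by spreading out Blunk's construction over the generic point $\operatorname{Spec}\bC(P)$ and then extending across the discriminant. Over the open locus $P^\circ \subset P$ with smooth fibers, the family of sextic del Pezzo surfaces carries a local system of $(-1)$-curve configurations, giving a monodromy representation $\rho = (\rho_2,\rho_3)$,
\[ \rho\colon \pi_1(P^\circ) \ra \Aut(\text{hexagon}) = \fD_{12} \simeq \fS_2 \times \fS_3. \]
I would let $Y \ra P$ be the normalization of $P$ in the connected étale double cover $Y^\circ \ra P^\circ$ cut out by $\rho_2$ (equivalently, the relative variety of blow-down structures $S\ra\bP^2$), and $Z \ra P$ the normalization in the triple cover $Z^\circ \ra P^\circ$ cut out by $\rho_3$ (the relative variety of the three conic-bundle structures). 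Over $Y^\circ$ the pulled-back del Pezzo family acquires a tautological set of three disjoint $(-1)$-curves; contracting them gives a smooth family of Brauer--Severi surfaces (forms of $\bP^2$), whose class is an element $\eta^\circ \in \Br(Y^\circ)[3]$. Likewise, over $Z^\circ$ the tautological conic-bundle structure yields a smooth conic bundle, i.e.\ a class $\zeta^\circ \in \Br(Z^\circ)[2]$. These are precisely the relative incarnations of Blunk's $B/K$ (degree three, whence $3$-torsion) and $Q/L$ (quaternion, whence $2$-torsion).

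The next step is to check that $Y$ and $Z$ are smooth with the asserted branch divisors; this is where Definition~\ref{defi:good} enters. The branching of $Y \ra P$ is governed by $\rho_2$, which is nontrivial precisely around $B_I$ (the $A_1$ of a type~I fiber swaps the two blow-down structures) and trivial around $B_{II}$ and $B_{IV}$; symmetrically $Z \ra P$ branches exactly along $B_{II}$. I would confirm smoothness by writing a local model of the monodromy at each stratum: at a general point of $B_I$ the cover $Y$ is a simple double cover $t = u^2$ over a smooth curve, hence smooth; along $B_{II}$ the cover $Z$ is simply branched (a transposition), while at the cusps $B_{IV}$ the $\fS_3$-monodromy is a $3$-cycle, so $Z$ is totally ramified there---this is exactly why one requires $B_{II}$ to be cuspidal at $B_{IV}$. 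At $B_{III} = B_I \cap B_{II}$ the transversality hypothesis makes both covers smooth, and the emptiness of $B_V$ removes the only remaining (codimension-three) stratum.

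The main obstacle is extending $\eta^\circ$ and $\zeta^\circ$ across the discriminant to honest classes $\eta \in \Br(Y)[3]$ and $\zeta \in \Br(Z)[2]$. Since $Y$ and $Z$ are smooth surfaces over $\bC$, purity of the Brauer group (Auslander--Goldman / Grothendieck) identifies $\Br(Y)$ with the classes in $\Br(\bC(Y))$ unramified at every height-one point, and similarly for $Z$. The crucial simplification is that $B_{III}$ and $B_{IV}$ have codimension two in $P$, so their preimages in the \emph{surfaces} $Y,Z$ are finite and are irrelevant to the purity criterion. It therefore suffices to prove that $\eta^\circ$ is unramified along the curves of $Y$ lying over $B_I$ and over $B_{II}$, and that $\zeta^\circ$ is unramified along the curves of $Z$ over $B_I$ and over $B_{II}$.

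I expect this last step to be the delicate part, requiring an explicit local analysis of the degenerating del Pezzo family fiber type by fiber type. Along the branch divisor over $B_I$ one uses that the double cover $t=u^2$ is exactly the base change admitting a simultaneous (Brieskorn) resolution of the $A_1$ fiber, so the contracted Brauer--Severi surface extends smoothly over that divisor and $\eta$ stays unramified; along the preimage of $B_{II}$, where $Y \ra P$ is étale, the $A_1$ lies in the conic-bundle direction (the $\fS_3$ factor) and one checks directly that it does not ramify the degree-three class. The treatment of $\zeta$ on $Z$ is symmetric: the ramified component of $Z\ra P$ over $B_{II}$ is again locally a double cover $t=u^2$, enabling simultaneous resolution of the relevant $A_1$, and one verifies that the type~I degeneration leaves the conic-bundle class unramified along the preimage of $B_I$. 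Once unramifiedness is established at every height-one point, purity delivers the classes $\eta \in \Br(Y)[3]$ and $\zeta \in \Br(Z)[2]$ restricting to Blunk's $B$ and $Q$ over $\bC(P)$.
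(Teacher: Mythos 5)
Your proposal is correct in outline but takes a genuinely different route at the key step. The setup coincides with the paper's: both define $Y$ and $Z$ as normalizations of $P$ in the quadratic and cubic algebras cut out by the monodromy $\rho = (\rho_2,\rho_3)$, and both establish smoothness the same way (a double cover of a smooth surface branched along the smooth curve $B_I$ is smooth, and the cuspidality of $B_{II}$ at $B_{IV}$ forces smoothness of $Z$ via the local model $\bA^2 \to \bA^2/\{(12)\} \to \bA^2/\fS_3$ --- note, though, that transversality at $B_{III}$ is not what is needed there; only smoothness of $B_I$, resp.\ of $B_{II}$ away from the cusps, enters). Where you diverge is in producing $\eta$ and $\zeta$: you build the classes only over the open locus $Y^\circ$, $Z^\circ$ and then invoke purity of the Brauer group for smooth surfaces, reducing to unramifiedness at the generic points of the curves over $B_I$ and $B_{II}$, which you attack with simultaneous (Brieskorn) resolution after the ramified base change. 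The paper instead constructs the classes \emph{globally at once}: it takes the relative Hilbert scheme $\cH \to P$ of connected genus-zero curves of anticanonical degree three (resp.\ $\cH' \to P$, degree two), and shows by a fiber-by-fiber analysis --- how the classes $H_1, H_2$ specialize to Weil divisors on type I--IV fibers, with order $2$ or $3$ in the local class groups --- that the Stein factorization $\cH \to Y \to P$ (resp.\ $\cH' \to Z \to P$) begins with an \'etale $\bP^2$-bundle (resp.\ $\bP^1$-bundle), so the Brauer class exists outright with no extension step. Your approach buys the freedom to ignore the finitely many points over $B_{III}$ and $B_{IV}$ (codimension two is invisible to purity), whereas the paper must check those fibers too; the paper's approach buys a canonical, global Brauer--Severi scheme and avoids both purity and simultaneous resolution. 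But be aware that the substantive content is the same in both: your ``one checks directly that it does not ramify the degree-three class'' along the preimage of $B_{II}$, and the relative contraction you need after simultaneous resolution over $B_I$, are exactly the degeneration analysis of the blow-down and conic-bundle structures that the paper carries out via local class groups; as written, your proposal defers rather than completes that verification.
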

\begin{proof}
Let $Y$ and $Z$
denote the normalizations of $P$ in
$K$ and $L$ respectively.
Our strategy is to relate $Y$ and $Z$ to geometric constructions
associated with
$\pi\colon \cS \ra P$ which carry
the structure of Brauer-Severi schemes over these surfaces. 

We first address the double cover. Recall the
interpretations of $K$:
\begin{itemize}
\item{the fixed algebra of $\fS_3 \subset \fS_2 \times \fS_3$ as it acts on $KL$;}
\item{the algebra of definition of the blow-up realizations
of the geometric generic fiber
$\beta_1,\beta_2\colon \bar{S} \ra \bP^2;$}
\item{an extension trivializing the monodromy about
singular surfaces of type I.}
\end{itemize}
Thus $Y\ra P$ is simply branched along $B_I$ and hence is smooth.

Consider the pull-back family
$$\cS\times_P Y \ra Y.$$
It is singular along a curve $C_I$, the closure of the $A_1$ singularities
associated with $B_I$; a local computation shows these are
ordinary threefold double points at the generic point of $C_I$.
Consider the blow-up 
$$\cS_I:=\Bl_{C_I}(\cS\times_P Y) \ra Y;$$
its fiber over a general point $c\in C_I$ is the union 
$$\Bl_c(\cS_b) \cup_{L_b} \bF_0,$$
where $b \in B_I$ is the image of $c$, $\cS_b=\pi^{-1}(b)$, and
$\bF_0 \simeq \bP^1 \times \bP^1$.
Since $\cS_b$ is type I, $\Bl_c(\cS_b)$ is isomorphic to $\bP^2$ blown
up at three collinear points and $L_b$ is the proper transform of the
line containing them. 
The class of $L_b \subset \bF_0\simeq \bP^1 \times \bP^1$ is
equivalent to the diagonal.

\begin{lemm}
There exists an open subset $U\subset Y$ with finite complement
over which $\cS_I$ is isomorphic to the blow-up of a
Brauer-Severi fibration over $U$ restricting to $\BS(B)$
over the generic point.
\end{lemm}
\begin{proof}[Proof (of Lemma)]
Choose $U$ to exclude the preimages of $B_{III}$ and $B_{IV}$
and any other points of $B_I$ where the description above 
fails to hold.

Over the generic point of $B_{II}$,
the contractions $\beta_1$ and $\beta_2$ are well-defined
morphisms, induced by Cartier divisors on the underlying
type II surfaces. Indeed, a type II surface is obtained by
blowing up $\bP^2$ at three points, two of which are infinitely 
near, and blowing down the proper transform of the line through these;
the two blow-up realizations are visible.

Type I surfaces $\cS_b$ only admit one rational
map $\beta\colon \cS_b \dashrightarrow \bP^2$, with indeterminacy
at the $A_1$ singularity. But the blow-up described above resolves
this. The desired blowdown acts on 
$\Bl_c(\cS_b) \cup_{L_b} \bF_0$ by blowing down $\Bl_c(\cS_b)$
to $\bP^2$ and collapsing $\bF_0$ to one of its $\bP^1$ factors. 

Let $L_1$ and $L_2$ denote Cartier divisors on $\cS_I \times_P U$
inducing $\beta_1$ and $\beta_2$, which make sense even over the
generic point of $B_I$ by the previous paragraph.
Fiberwise computation \cite{Har} shows that the divisors $L_1$ and $L_2$ have
vanishing higher direct images and are basepoint free over $U$.
We construct the Brauer-Severi fibration by taking 
relative $\Proj$ of the graded $\cO_U$-algebra given by the pushforward of $\bigoplus_m \cO_{\cS_I}(m L_1)$, or we could equally use $L_2$.
Over the generic point, this blows down three disjoint $(-1)$-curves; over
$B_I$ we also collapse the $\bF_0$ component as indicated above.  
These Brauer-Severi fibrations give rise to inverse elements in
$\Br(U)[3]$ by~\cite[Theorem~2.3]{Corn}.
\end{proof}
The classes in $\eta \in \Br(U)[3]$ given by the lemma extend
to $Y$ via purity \cite[Th.~6.1]{Gro}.

\

We turn to the triple cover. Recall the
interpretations of $L$:
\begin{itemize}
\item{the fixed algebra of 
$$\fS_2 \times \langle \tau \rangle
\subset \fS_2 \times \fS_3,$$
as it acts on $KL$, where $\tau$ is a transposition;}
\item{the algebra of definition for one of the conic bundle realizations
of the geometric generic fiber
$\gamma_1,\gamma_2,\gamma_3\colon \bar{S} \ra \bP^1.$}
\end{itemize}
Moreover, the transposition $\tau$ corresponds to a root of
$\fS_2\times \fS_3$ associated with type II singular fibers.
This identification shows that $Z$ is simply branched over $B_{II}$.
The interpretation of type IV fibers via subgroups of $\fS_2 \times \fS_3$
shows that $Z$ is totally branched along $B_{IV}$. 
The standard theory of triple covers \cite[\S 5]{Mir} implies that
$Z$ is non-singular. 

Consider the pull-back family
$$\cS\times_P Z \ra Z.$$
It is singular along a curve $C_{II}$, the closure of the $A_1$ singularities
associated with $B_{II}$, with 
ordinary threefold double points at the generic point of $C_I$.
The fibers over the generic point $c\in C_{II}$ are type II surfaces $\cS_b$,
where $b \in B_{II}$ is the image of $c$.

Number the conic bundles so that $\gamma_1$ and $\gamma_2$ coincide at $c \in C_{II}$, 
i.e., $L$ is the algebra of definition for $\gamma_3$. Our geometric description of
type II surfaces implies
\begin{itemize}
\item{$\gamma_1=\gamma_2\colon \cS_b \dashrightarrow \bP^1$, a rational map
with indeterminacy at the $A_1$ singularity;}
\item{$\gamma_3\colon\cS_b \ra \bP^1$ is well defined.}
\end{itemize}
The description of type I surfaces implies that the
three conic bundles
$$\gamma_1,\gamma_2,\gamma_3:\cS_b \dashrightarrow \bP^1$$
are distinct and define morphisms over the smooth locus of $\cS_b$,
with the singularity as base point.  
Where the conic bundle is a rational map, the general fibers
are smooth curves on $\cS_b$ passing through an $A_1$ singularity,
hence $2$-Cartier but not Cartier.

As before, we choose $V \subset Z$ with codimension-two complement,
excluding points mapping to $B_{III}$ or $B_{IV}$ but meeting
$B_I$ and $B_{II}$. We construct the conic bundle over $V$ as the
base $\bP^1$s parametrizing the families of conics.
This may be interpreted through linear series: There exists
a Cartier divisor $M$ on $\cS\times_P V$ -- twice the class of the corresponding
conic fibers. It has the following properties:
\begin{itemize} 
\item{for $v\in V$ not lying over $B_I$ or in the ramification locus 
of $V \ra P$, $M$ induces the tautological
conic bundle $\cS_v \ra \bP^1$ with the base realized as a conic curve;}
\item{for $v\in V$ lying over $B_I$ or in the ramification locus, it induces the tautological
rational conic bundle $\cS_v \dashrightarrow \bP^1$, again with the
base realized as a smooth conic curve.}
\end{itemize}
We construct the Brauer-Severi fibration by taking the
$\Proj$ for $M$ on $\cS\times_P V$ relative to $V$, similar to what we did with $L_1$ or $L_2$ over $U$.

Let $\zeta \in \Br(V)[2]$
be the corresponding class, which extends by purity to an
element of $\Br(Z)$.
\end{proof}

\begin{prop} \label{prop:Eulergeneral}
Let $\pi\colon\cS \ra P$ be a good del Pezzo fibration and fix 
$$\begin{array}{rcl}
b_{IV} & = & \chi(B_{IV})=|B_{IV}| \\
b_{III} & = & \chi(B_{III})=|B_{III}|=|B_{I} \cap B_{II}| \\
b_{II} & = & \chi(B_{II}\setminus (B_{III}\cup B_{IV})) \\
b_{I} & = & \chi(B_{I}\setminus B_{III}),
\end{array}
$$
where $\chi$ is the topological Euler characteristic.
Then we have
$$\chi(\cS)=6\chi(P)-b_I-b_{II}-2b_{III}-2b_{IV}.$$
\end{prop}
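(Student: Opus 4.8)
The plan is to stratify the base $P$ according to the isomorphism type of the fiber of $\pi$, compute the Euler number of each fiber type, and assemble the answer from the additivity of the topological Euler characteristic over locally closed strata together with its multiplicativity in locally trivial fibrations. Concretely, $P$ decomposes into the locally closed pieces $P^\circ := P \setminus (B_I \cup B_{II})$ (smooth fibers), $B_I \setminus B_{III}$ (type I), $B_{II} \setminus (B_{III} \cup B_{IV})$ (type II), $B_{III}$ (type III), and $B_{IV}$ (type IV); the locus $B_V$ is empty by hypothesis. The good-fibration axioms force $B_{III} = B_I \cap B_{II}$, and a point of $B_{IV} \cap B_I$ would lie in $B_I \cap B_{II} = B_{III}$, which is impossible since type III and type IV fibers are distinct; hence $B_{IV} \cap B_I = \emptyset$ and the four boundary pieces partition $B_I \cup B_{II}$. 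Consequently
$$\chi(P^\circ) = \chi(P) - b_I - b_{II} - b_{III} - b_{IV}.$$

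Next I would compute the Euler number of each fiber. A smooth sextic del Pezzo surface is $\bP^2$ blown up at three points, so its Euler characteristic is $6$. For each singular type I pass to the minimal resolution, which is a weak del Pezzo surface of degree six and hence again has Euler characteristic $6$; contracting its $(-2)$-configurations records the drop. Contracting a single $(-2)$-curve (an $A_1$ point) lowers $\chi$ by one, while contracting an $A_2$ chain of two $(-2)$-curves lowers it by two. Thus types I and II (one $A_1$) have $\chi = 5$, while types III (two $A_1$) and IV (one $A_2$) have $\chi = 4$.

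Finally I would combine these. Over each stratum the family $\pi$ is equisingular, hence a topological fiber bundle, so the Euler characteristic of the preimage of a stratum is the product of the Euler characteristic of the stratum and that of the fiber. Additivity over the strata then gives
$$\chi(\cS) = 6\,\chi(P^\circ) + 5 b_I + 5 b_{II} + 4 b_{III} + 4 b_{IV},$$
and substituting the expression for $\chi(P^\circ)$ and collecting terms yields the claimed formula $\chi(\cS) = 6\chi(P) - b_I - b_{II} - 2b_{III} - 2b_{IV}$.

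The main obstacle is justifying the multiplicativity step over each stratum, namely that the restriction of $\pi$ to the preimage of a stratum is locally trivial in the analytic topology. This is precisely what the transversality conditions built into the definition of a good del Pezzo fibration supply: the classifying map to the moduli of singular del Pezzo surfaces meets each singular stratum transversely, so the fiber singularities propagate in equisingular families and the restricted fibration is locally trivial (for instance by a relative version of Ehresmann's theorem applied stratum by stratum). Granting this, the remaining steps are the bookkeeping above.
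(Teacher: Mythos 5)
Your proposal is correct and follows essentially the same route as the paper: stratify $P$ by fiber type, use the fiber Euler characteristics $6$ (smooth), $5$ (types I, II), $4$ (types III, IV), and combine additivity with multiplicativity over strata. The paper's proof is just a three-line sketch of exactly this computation; your write-up supplies the details (including the useful observation that $B_{IV}\cap B_I=\emptyset$, which the paper's definition of $b_I$ implicitly assumes).
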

\begin{proof}
This follows from the stratification of the fibration by singularity type in Definition~\ref{defi:good}.
A smooth sextic del Pezzo surface has $\chi=6$. 
For types I and II we have $\chi=5$; for types III and IV we have
$\chi=4$.
\end{proof} 

We specialize Proposition~\ref{prop:Eulergeneral} to the case where
the base is $\bP^2$, using Bezout's Theorem and the genus formula:
\begin{coro} \label{coro:Euler}
Let $\pi\colon\cS \ra \bP^2$ be a good del Pezzo fibration;
write $d_I=\deg(B_I)$ and $d_{II}=\deg(B_{II})$. Then we
have
$$\chi(\cS)=14+(d_I-1)(d_I-2)+(d_{II}-1)(d_{II}-2)-3b_{IV}.$$
\end{coro}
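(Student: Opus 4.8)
The plan is to specialize the general Euler characteristic formula of Proposition~\ref{prop:Eulergeneral} to the base $P = \bP^2$ by computing each of the topological invariants $b_I$, $b_{II}$, $b_{III}$, $b_{IV}$ in terms of the degrees $d_I = \deg(B_I)$ and $d_{II} = \deg(B_{II})$. Since $\chi(\bP^2) = 3$, the leading term $6\chi(P) = 18$ is immediate, so the entire task is bookkeeping on the discriminant curves using Bezout's theorem and the genus formula.

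First I would handle the two intersection-point counts. By the good-fibration hypotheses (Definition~\ref{defi:good}), $B_{III} = B_I \cap B_{II}$ is a transverse intersection, so Bezout gives $b_{III} = |B_I \cap B_{II}| = d_I \cdot d_{II}$. The quantity $b_{IV}$ is the number of cusps of $B_{II}$, which has no clean expression in $d_I, d_{II}$ alone; accordingly it is left as a free parameter $b_{IV}$ in the statement, exactly as Corollary~\ref{coro:Euler} presents it. Next I would compute the Euler characteristics of the curve strata. For a (possibly singular) plane curve $C$ of degree $d$, the normalization has genus governed by the arithmetic genus $\binom{d-1}{2}$ corrected by the singularities; removing the marked points and accounting for the effect of cusps and nodes on $\chi$ is the crux.

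The key computation is to assemble $b_I + b_{II} + 2b_{III} + 2b_{IV}$. For the smooth curve $B_I$ of degree $d_I$, its Euler characteristic as a closed curve is $2 - 2g = 2 - (d_I-1)(d_I-2)$, and then $b_I = \chi(B_I \setminus B_{III})$ subtracts the $b_{III}$ points lying on it. For $B_{II}$, which is nonsingular away from its cusps at $B_{IV}$, I would use that a cusp contributes a local $\delta$-invariant of $1$ but does not change $\chi$ upon normalization in the way a node does; concretely, a cuspidal point on $B_{II}$ is a single point downstairs with a single preimage upstairs, so passing to $\chi(B_{II} \setminus (B_{III} \cup B_{IV}))$ means starting from $2 - (d_{II}-1)(d_{II}-2) + 2b_{IV}$ (the cusps lowering the geometric genus relative to the arithmetic genus) and removing the $b_{III} + b_{IV}$ marked points. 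Substituting these into the formula of Proposition~\ref{prop:Eulergeneral}, the terms proportional to $b_{III}$ and the extra cusp contributions should telescope, leaving
\[
\chi(\cS) = 18 - \bigl(2 - (d_I-1)(d_I-2)\bigr) - \bigl(2 - (d_{II}-1)(d_{II}-2)\bigr) - 3b_{IV},
\]
which simplifies to the claimed $14 + (d_I-1)(d_I-2) + (d_{II}-1)(d_{II}-2) - 3b_{IV}$.

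The main obstacle will be the careful accounting of how the cusps of $B_{II}$ enter both the genus formula and the Euler characteristic $b_{II}$ simultaneously, and checking that every marked point in $B_{III}$ and $B_{IV}$ is subtracted with exactly the right multiplicity so that the coefficient of $b_{IV}$ comes out to precisely $-3$ rather than some other value. This is where a sign or a factor-of-two slip is most likely, so I would verify the cusp contribution against the standard fact that a unibranch cusp lowers the geometric genus by one while contributing a single point to the normalization, ensuring consistency between the genus defect and the point count.
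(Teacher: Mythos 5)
Your proposal is correct and takes essentially the same route as the paper, which proves the corollary simply by specializing Proposition~\ref{prop:Eulergeneral} to $P=\bP^2$ via Bezout's theorem and the genus formula: $b_{III}=d_Id_{II}$, $\chi(B_I)=2-(d_I-1)(d_I-2)$, and $\chi(B_{II})=2-(d_{II}-1)(d_{II}-2)+2b_{IV}$ since each cusp is unibranch with $\delta$-invariant $1$. Your bookkeeping is right: the $d_Id_{II}$ terms cancel and the cusp contributions collect into the coefficient $-3$ on $b_{IV}$, exactly as claimed.
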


\

We specialize further to the del Pezzo fibrations appearing in our main construction:

\begin{prop}
Let $\pi\colon X' \to \bP^2$ be as in Theorem~\ref{theo:fibration}.  The discriminant locus contains a sextic curve with nine cusps, projectively dual to a smooth plane cubic.
\end{prop}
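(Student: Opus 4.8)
The plan is to realize the required sextic as the projective dual of the \emph{discriminant cubic} of the net of quadrics through $T$, and to identify its nine cusps with the nine flexes of that cubic. First I would make the net explicit using the two disjoint planes. Writing $\bP^5 = \bP(U_1 \oplus U_2)$ with $\Pi_i = \bP(U_i)$, a quadric containing $\Pi_1 \cup \Pi_2$ has equation $q_M(u_1 + u_2) = u_1^T M u_2$ for some $M \in \Hom(U_2, U_1^*)$, and $q_M$ has rank $2\,\operatorname{rk}(M)$. Thus the net $N = \bP(H^0(\cI_T(2))) \cong \bP^2$ is identified with a net of $3\times 3$ matrices, and the locus of singular members is the plane cubic $\Delta = \{[M] \in N : \det M = 0\}$. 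Since the rank-one locus of a generic net of $3\times 3$ matrices is empty in $\bP^2$ (expected codimension four), $\Delta$ is a \emph{smooth} plane cubic; in fact $[M] \mapsto \bP(\ker M^{T}) \in E_i$ exhibits an isomorphism $\Delta \cong E$.

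Next I would translate degenerations of the del Pezzo fibers into the geometry of $\Delta$. The base of $\pi$ is the dual plane $N^\vee$, a point of which is a pencil $\ell \subset N$; the fiber $S_\ell$ is residual to $T$ in the intersection of $X$ with the base locus $V_\ell$ of $\ell$. As in the proof of Theorem~\ref{theo:fibration}, $V_\ell$ has six nodes $s_{ij}$, which I would reinterpret as the endpoints on $\Pi_1, \Pi_2$ of the vertex lines of the three rank-four quadrics $\ell \cap \Delta$ in the pencil. The fiber $S_\ell$ then degenerates precisely when two of these singular quadrics collide, i.e.\ when $\ell$ is tangent to $\Delta$. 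Hence this component of the discriminant locus is the dual curve $\Delta^\vee \subset N^\vee$.

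Finally I would invoke classical projective duality: the dual of a smooth plane cubic is a sextic whose only singularities are nine cusps, one for each of the nine flexes of $\Delta$. To match this with the classification of Section~\ref{sect:fibrations}, note that a simple tangent line makes two vertices collide, producing an $A_1$ singularity (a type II fiber), while a flex tangent makes all three collide, producing an $A_2$ singularity (a type IV fiber). Thus $\Delta^\vee = B_{II}$ is the branch sextic of the triple cover $Z \to \bP^2$ of Proposition~\ref{prop:blunk} --- which is exactly the incidence correspondence between $\Delta$ and the lines through its points --- and its nine cusps are the nine type IV points $B_{IV}$, consistently with Definition~\ref{defi:good}.

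The main obstacle is the local step identifying the singularity type of $S_\ell$ from the collision of singular quadrics: one must show, via the small resolution $V'_\ell \subset \Bl_T(\bP^5)$ of Theorem~\ref{theo:fibration}, that a simple tangency of $\ell$ to $\Delta$ yields exactly an $A_1$ on $S_\ell$ while a flex tangency yields an $A_2$, and that no other discriminant component coincides with $\Delta^\vee$. For a generic $X$ this transversality can be checked on the explicit equations of Section~\ref{sect:example}.
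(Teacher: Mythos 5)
Your construction is in substance the same as the paper's: your discriminant cubic $\Delta=\{\det M=0\}$ in the net $N$ of quadrics through $T$ is exactly the paper's smooth cubic $E_3\subset\Pi_3$ (the paper produces it by viewing the three $(1,1)$-forms as hyperplane sections of the Segre embedding $\Pi_1\times\Pi_2\subset\bP^8$ and slicing the dual determinantal cubic by the corresponding plane), the base of $\pi$ is the dual plane in both accounts, and your reinterpretation of the six nodes of $V_\ell$ as the endpoints on $E_1,E_2$ of the vertex lines of the three rank-four quadrics in the pencil is correct and consistent with the paper. The problem is that the entire content of the proposition sits in the one step you do not prove: that tangency of $\ell$ to $\Delta$ actually forces the residual surface $S_\ell$ to be singular. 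You assert that ``$S_\ell$ degenerates precisely when two of these singular quadrics collide,'' then flag this as ``the main obstacle'' and defer it to a check ``on the explicit equations of Section~\ref{sect:example}.'' That deferral cannot close the gap: containment of the fixed curve $\Delta^\vee$ in the discriminant locus is a \emph{closed} condition on $X$, not an open one, so verifying it for the single example does not propagate to the generic member of the family (note the paper is careful to use its example only for \emph{open} properties, namely goodness in the sense of Definition~\ref{defi:good}).

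The missing argument is short, and it is what the paper's proof actually consists of; in your matrix language it reads as follows. Suppose $\ell$ is tangent to $\Delta$ at a rank-two matrix $[M_0]$, and set $x_1^0=\bP(\ker M_0^{T})\in\Pi_1$, $x_2^0=\bP(\ker M_0)\in\Pi_2$. Since the derivative of $\det$ at $M_0$ is $M\mapsto\operatorname{tr}(\operatorname{adj}(M_0)\,M)$ and $\operatorname{adj}(M_0)$ is proportional to the rank-one matrix $x_2^0\,(x_1^0)^{T}$, tangency of $\ell$ at $[M_0]$ is equivalent to $(x_1^0)^{T}M\,x_2^0=0$ for all $M\in\ell$; that is, the vertex line of $q_{M_0}$, namely the line joining $x_1^0$ to $x_2^0$, is contained in \emph{every} quadric of the pencil, hence in $V_\ell$, and $V_\ell$ is singular along this whole line because $dq_{M_0}$ vanishes there. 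This line meets $T$ only in its two endpoints $x_1^0\in E_1$ and $x_2^0\in E_2$ (a point off the two planes lies on a unique line meeting both, and for a generic net the vertex points of singular members of $\Delta$ avoid the base curve $E$ --- this is the paper's statement that the singularities of $W_L$ are disjoint from $E$). Since $X$ is a cubic, it meets the line in at least one further point; that point lies on $S_\ell$, and $S_\ell$ is singular there. Combined with the classical fact you cite --- the dual of a smooth plane cubic is a sextic with exactly nine cusps, one per flex --- this proves the proposition. Your finer claims (simple tangency gives $A_1$/type II, flexes give $A_2$/type IV, $\Delta^\vee=B_{II}$, the cusps are $B_{IV}$) are correct expectations but are not needed here, and appear in the paper only as remarks and via the explicit example.
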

\begin{proof}
Again let
\[ \xymatrix{
\tilde\bP^5 \ar[d]_-q \ar[r]^-p & \Pi_1 \times \Pi_2 \\
\bP^5
} \]
be as in the proof of Proposition~\ref{prop:two_planes}, and recall that $p$ is a $\bP^1$-bundle and that $E \subset \Pi_1 \times \Pi_2$ was obtained as the intersection of three $(1,1)$-divisors.  Let $\Pi_3 = |E| \cong \bP^2$ be the linear system that they span.  If we view $\Pi_1 \times \Pi_2$ as embedded in $\bP^8$ via the Segre embedding, then $\Pi_3$ is embedded in the dual $\bP^8$.  By projective duality, the singular $(1,1)$-divisors are parametrized by a smooth determinantal cubic curve $E_3 \subset \Pi_3$.  The linear system of quadrics containing $T$ is identified with $\Pi_3$, so the map $\pi$ takes values in $\Pi_3^\vee$, and we claim that over the dual curve $E_3^\vee \subset \Pi_3^\vee$, the fibers of $\pi$ are singular.

Indeed, a line $L \subset \Pi_3$ determines a complete intersection of two $(1,1)$-divisors $W_L \subset \Pi_1 \times \Pi_2$. By standard arguments, $W_L$ is singular if and only if $L$ is tangent to $E_3$, and the singularities of $W_L$ are disjoint from $E$.  Thus $\tilde V_L = p^{-1}(W_L)$ is singular along a line disjoint from $\tilde T = p^{-1}(E)$, and $V_L = q(\tilde V_L)$ is singular along a line that meets $T = q(\tilde T)$ in only two points: one contained in $E_1 = T \cap \Pi_1$, and one in $E_2$.  The cubic $X$ meets this line in these two points and at least one more, yielding a singularity of the residual surface $S_L$, which is the fiber of $\pi$ over the point $L^\vee \in \Pi_3^\vee$.

The nine cusps of $E_3^\vee$ correspond to the nine inflection points of the cubic $E_3$.
\end{proof}

\begin{remas}
(a) The net of $(1,1)$-divisors containing $E$ is identified with the net of quadrics containing $T$, and the discriminant sextic of the latter is the square of the discriminant cubic of the former.

(b) The relation between the elliptic curves $E \subset \Pi_1 \times \Pi_2$ and $E_1, E_2, E_3$ in $\Pi_1, \Pi_2, \Pi_3$ is explained in \cite{ng} or \cite[\S5.2]{bhargava-ho}.

(c) The two trisections of $\pi\colon X' \to \bP^2$ are also ramified over this cuspidal sextic, hence are likely isomorphic to the triple cover $Z$ of Proposition~\ref{prop:blunk}.  We also expect that the elliptic ruled surface $T$ is isomorphic to $Z$.  Clearly there is a lot of interesting geometry to explore here.
\end{remas}

\begin{prop}
If the fibration $\pi\colon X' \to \bP^2$ is good in the sense of Definition~\ref{defi:good}, then the curve $B_I \subset \bP^2$ is a sextic, and the double cover $Y$ is a K3 surface of degree 2.
\end{prop}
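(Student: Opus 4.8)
The plan is to determine $\deg B_I$ from the Euler-characteristic formula of Corollary~\ref{coro:Euler} and then appeal to the classical theory of double covers of the plane. Applying the corollary with $\cS = X'$, the identity
\[
\chi(X') = 14 + (d_I-1)(d_I-2) + (d_{II}-1)(d_{II}-2) - 3b_{IV}
\]
relates the four quantities $\chi(X')$, $d_I$, $d_{II}$ and $b_{IV}$. My strategy is to supply the values of $\chi(X')$, $d_{II}$ and $b_{IV}$ independently, solve for $d_I$, and finally upgrade the resulting numerical statement about $B_I$ to the geometric statement about $Y$.

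First I would compute $\chi(X')$. Since $X' = \Bl_T(X)$ is the blow-up of a smooth cubic fourfold along the smooth, codimension-two surface $T$, the blow-up formula gives $\chi(X') = \chi(X) + \chi(T)$. A smooth cubic fourfold has $\chi(X) = 27$, while $T$, being a $\bP^1$-bundle over an elliptic curve, has $\chi(T) = 0$; hence $\chi(X') = 27$. Next I would read off $d_{II} = 6$ and $b_{IV} = 9$ from the preceding results: the type-II locus $B_{II}$ is the cuspidal sextic dual to the smooth plane cubic $E_3$ produced in the previous proposition and identified with the branch curve of the triple cover $Z$ in Remark (c). As the dual of a smooth cubic it has degree six, with exactly nine cusps, and these cusps are precisely the type-IV ($A_2$) fibers, so $b_{IV} = 9$. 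Substituting these values leaves $(d_I - 1)(d_I - 2) = 20$, whence $d_I = 6$. Since $B_I$ is non-singular by the goodness hypothesis, it is a smooth plane sextic.

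For the K3 claim I would invoke Proposition~\ref{prop:blunk}, which furnishes the non-singular double cover $f\colon Y \to \bP^2$ branched along $B_I$. A double cover of $\bP^2$ branched along a smooth curve of degree $2k$ has $K_Y = f^*\big((k-3)H\big)$; for the sextic $B_I$ we have $k = 3$, so $K_Y = 0$, and the Euler-characteristic count $\chi(Y) = 2\chi(\bP^2) - \chi(B_I) = 6 - (-18) = 24$ shows that $Y$ is a K3 surface. Its natural polarization $f^*H$ satisfies $(f^*H)^2 = 2H^2 = 2$, so $Y$ is polarized of degree two.

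The step requiring the most care is not the arithmetic but the justification of the inputs $d_{II} = 6$ and $b_{IV} = 9$. One must confirm that, under the goodness hypothesis, the type-II locus is exactly the cuspidal sextic of the previous proposition — with no additional components — and that its nine cusps account for all of $B_{IV}$ (in particular that $B_V = \emptyset$, as goodness requires). Once these identifications are secured, the computation of $d_I$ and the double-cover argument are routine.
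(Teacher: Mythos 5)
Your proof follows the paper's argument exactly: compute $\chi(X') = \chi(X) = 27$ using $\chi(T) = 0$, feed $d_{II} = 6$ and $b_{IV} = 9$ from the preceding proposition into Corollary~\ref{coro:Euler} to solve $(d_I-1)(d_I-2) = 20$, hence $d_I = 6$, and identify $Y$ as the double cover of $\bP^2$ branched along the smooth sextic $B_I$, hence a K3 surface of degree 2. Your additional details---the canonical-bundle and Euler-characteristic verification of the K3 claim, and the observation that one must still check that the cuspidal sextic is all of $B_{II}$ and that its nine cusps exhaust $B_{IV}$---are sound and in fact address points the paper leaves implicit.
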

\begin{proof}
Since $X' = \Bl_T(X)$ and $\chi(T) = 0$, we have
\[ \chi(X') = \chi(X) = 1 + 1 + 23 + 1 + 1 = 27. \]
In the previous proposition we have seen that $d_{II} = 6$ and $b_{IV} = 9$.  Thus from Corollary~\ref{coro:Euler} we find that $d_I = 6$.  For the last statement, recall that $Y$ is the double cover of $\bP^2$ branched over $B_I$.
\end{proof}

\begin{rema} Let $f$ denote the degree-two polarization on $Y$ associated with
the double cover. We expect that $(Y,f,\eta)$ coincides (up to sign) with the
triple $(Y',f',\eta')$ obtained (via Hodge theory) at the end of
Proposition~\ref{prop:HTI}.
\end{rema}

In the next section we will see that there are cubics $X \in \cC_{18}$ for which $\pi\colon X' \to \bP^2$ is good.

\section{An explicit example}
\label{sect:example}

The computations below were verified symbolically with Macaulay2 \cite{M2}.  Code is available on the arXiv as an ancillary file.  For speed of computation we work over the finite field $\bF_{5}$, explaining at the end of the section how to lift to characteristic zero.

Let $\bP^5 = \Proj(\bF_{5}[x_0,\dots,x_5])$.  Define quadrics
	\begin{align*}
		Q_1	&= 3x_0x_3 + 2x_0x_5 + 4x_1x_3 + 2x_1x_4 + x_2x_3 + x_2x_4 + 2x_2x_5;\\
		Q_2	&= x_0x_3 + 2x_0x_5 + x_1x_3 + 3x_1x_5 + 2x_2x_4 + 3x_2x_5;\\
		Q_3	&= 2x_0x_4 + x_0x_5 + x_1x_3 + 2x_1x_5 + 4x_2x_3 + 3x_2x_5.
	\end{align*}
	Each quadric contains the planes
	\[
	\{x_0 = x_1 = x_2 = 0\}\qquad\text{and}\qquad\{x_3 = x_4 = x_5 = 0\}.
	\]
	The sextic elliptic ruled surface $T$, obtained by saturating the ideal generated by the three quadrics with respect to the defining ideals of the planes, is cut out by $Q_1, Q_2, Q_3$, and the two cubics
	\begin{align*}
      x_3^3 + 2x_3x_4^2 + x_3x_4x_5 + 4x_3x_5^2 + 4x_4^3 + 4x_5^3  &= 0,\\
      x_0^3 + 4x_0^2x_1 + x_0^2x_2 + 2x_0x_1^2 + 2x_0x_1x_2 + 4x_0x_2^2 + x_1^3 + 3x_1^2x_2 + x_2^3 &= 0.
	\end{align*}
	
	The surface $T$ is contained in the cubic fourfold $X$ cut out by
	\[
	\begin{split}
		f	&:= x_0^3 + 4x_0^2x_1 + x_0^2x_2 + x_0^2x_3 + 3x_0^2x_4 + 3x_0^2x_5 + 2x_0x_1^2 + 2x_0x_1x_2 \\
			&\qquad + 4x_0x_1x_3 + 3x_0x_1x_4 + 4x_0x_1x_5 + 4x_0x_2^2 + x_0x_2x_3 + 3x_0x_2x_4 \\
			&\qquad + 2x_0x_2x_5 + 3x_0x_3^2 + 4x_0x_3x_5 + 4x_0x_4^2 + 2x_0x_4x_5 + x_0x_5^2 + x_1^3 \\
			&\qquad + 3x_1^2x_2 + 4x_1^2x_3 + x_1x_2x_3 + 3x_1x_2x_4 + 4x_1x_2x_5 + 3x_1x_3^2 \\
			&\qquad + x_1x_3x_4 + 2x_1x_4^2 + x_1x_4x_5 + 2x_1x_5^2 + x_2^3 + 4x_2^2x_3 + x_2^2x_4 \\
			&\qquad + 4x_2^2x_5 + 4x_2x_3^2 + 3x_2x_3x_5 + 3x_2x_4^2 + 2x_2x_4x_5 + 4x_2x_5^2 + 4x_3^3 \\
			&\qquad + 3x_3x_4^2 + 4x_3x_4x_5 + x_3x_5^2 + x_4^3 + x_5^3	
    	\end{split}
	\]
	A direct computation of the partial derivatives of $f$ shows that $X$ is smooth.  The first order deformations of $T$ as a subscheme of $X$ are given by
\[
\Gamma(T,\cN_{T/X})=\Hom(\cI_T,\cO_T).
\]
Macaulay2 verifies that this is one-dimensional, as was required at the end of the proof of Theorem~\ref{theo:sexticcubic}.

	The discriminant locus of the map $\pi\colon X' := \Bl_T(X) \to \bP^2 = \left|\cI_T(2)\right|^\vee$ is a reducible curve of degree $12$, with two irreducible components:
	\begin{align*}
		B_I		&: x^6 + 2x^4y^2 + x^3y^3 + 4x^3y^2z + 2x^3z^3 + 4x^2y^4 + 4x^2y^2z^2 \\
				&\qquad + 4x^2yz^3 + 4xy^5 + xy^4z + xy^2z^3 + xyz^4 + 2xz^5 + 4y^6 \\
				&\qquad + 3y^5z + y^3z^3 + y^2z^4 + 4yz^5 = 0, \\
      		B_{II}	&:  x^6 + 2x^5y + 2x^4y^2 + x^4yz + 4x^3y^3 + 3x^3y^2z + 4x^3yz^2 + x^3z^3 \\
				&\qquad + 3x^2y^4 + 4x^2y^2z^2 + x^2yz^3 + 3x^2z^4 + 3xy^5 + 2xy^4z \\
				&\qquad + 3xy^3z^2 + 3xyz^4 + xz^5 + y^5z + 4y^4z^2 + 3y^3z^3 \\
				&\qquad + 2y^2z^4 + 4yz^5 = 0
	\end{align*}
	The curve $B_I$ is smooth, and $B_{II}$ has $9$ cusps.  Their intersection is a reduced $0$-dimensional scheme of degree $36$ and is thus transverse.  Thus $\pi$ is good in the sense of Definition~\ref{defi:good}.

Since the relevant Hilbert schemes are smooth and rational, the equations
we write down readily lift to characteristic zero. The properties we
stipulate are open (see Proposition~\ref{prop:versality}) and thus hold for any such lift.

\begin{note}[Added in revision]
We point out recent results since
this paper was first written:
Kontsevich and Tschinkel \cite{KT} have shown that rationality specializes in 
families of smooth projective varieties; thus the rationality assertion of
Theorem~\ref{theo:main} holds on $\cup \cC_K$, not just over an open subset.
Kuznetsov \cite{Kuz17} has further developed the theory of sextic del Pezzo surfaces
and their degenerations, with a view toward applications to derived categories.
Russo and Staglian\`o \cite{RS17,RS18} have proposed new constructions for rational cubic fourfolds
applicable over two divisors in the moduli space.

\end{note}

\bibliographystyle{alpha}
\bibliography{delPezzoExample}

\end{document}